\author{J. Arnlind \and A. Kitouni \and A. Makhlouf \and S. Silvestrov}
\title*{Structure and Cohomology of 3-Lie Algebras Induced by Lie Algebras}
\institute{J. Arnlind \at Department of Mathematics, Link\"{o}ping University, 581 83 Link\"{o}ping, Sweden,\\ \email{joakim.arnlind@liu.se}.
\and A. Kitouni \at Universit\'{e} de Haute-Alsace, 4 Rue des Fr\`{e}res Lumi\`{e}re, 68093 Mulhouse, France,\\ \email{abdennour.kitouni@uha.fr}
\and A. Makhlouf \at Universit\'{e} de Haute-Alsace, 4 Rue des Fr\`{e}res Lumi\`{e}re, 68093 Mulhouse, France,\\ \email{abdenacer.makhlouf@uha.fr}
\and S. Silvestrov \at M\"{a}lardalens h\"{o}gskola, Box 883, 721 23 V\"{a}ster\r{a}s, Sweden, \\ \email{sergei.silvestrov@mdh.se}
}
\newcommand{\AKMSbracket}[1]{\left[#1\right]}
\newcommand{\AKMSpara}[1]{\left(#1\right)}
\begin{document}

\maketitle

\abstract{
The aim of this paper is to compare the structure and the cohomology spaces of Lie algebras and induced $3$-Lie algebras. 
}

\section{Introduction}
Lie algebras have held a very important place in mathematics and physics for a long time. Ternary Lie algebras   appeared first  in Nambu's generalization of Hamiltonian mechanics \cite{Nambu:GenHD} which uses a generalization of Poisson algebra with a ternary bracket. The algebraic formulation is due to Takhtajan. The structure of $n$-Lie algebra was studied by Filippov \cite{Filippov:nLie} and Kasymov \cite{Kasymov:nLie}. 

The Lie algebra cohomology complex is well known under the name of Chevalley-Eilenberg cohomology complex. The cohomology of $n$-Lie algebras was first introduced by Takhtajan \cite{Takhtajan:cohomology} in its simplest form, later a complex adapted to the study of formal deformations was introduced by Gautheron \cite{Gautheron:Rem}, then reformulated by Daletskii and Takhtajan \cite{Dal_Takh} using the notion of base Leibniz algebra of a $n$-Lie algebra.

In \cite{almy:quantnambu}, the authors introduce a realization of the quantum Nambu bracket in terms of matrices (using the commutator and the trace of matrices). This construction is generalized in \cite{ams:ternary} to the case of any Lie algebra where the commutator is replaced by the Lie bracket, and the matrix trace is replaced by linear forms having similar properties, which we call $3$-Lie algebras induced by Lie algebras. This construction is generalized to the case of $n$-Lie algebras in \cite{ams:n}. 
We investigate the connections between the structural properties (Solvability, nilpotency,...) and the cohomology of a Lie algebra and an induced $3$-Lie algebra.

The paper is organized as follows: in Section \ref{AKMS:n-Lie} we recall main definitions and results concerning $n$-Lie algebras, and construction of $(n+1)$-Lie algebras induced by $n$-Lie algebras. In Section \ref{AKMS:Structure} we study some structural properties of $3$-Lie algebras induced by Lie algebras, in particular: common subalgebras and ideals, solvability and nilpotency. In Section \ref{AKMS:Cohomology}, we recall the cohomology complexes for Lie algebras and $3$-Lie algebras, then we study relations between $1$ and $2$ cocycles of a Lie algebra and the induced $3$-Lie algebra. In Section \ref{AKMS:Extension}, we give definitions of central extensions of Lie algebras and $n$-Lie algebras, then we study the relation between central extension of a Lie algebra and those of a $3$-Lie algebra it induces. In Section \ref{AKMS:Classifications} we give a method to recognize $3$-Lie algebras that are induced by some Lie algebra, and applying it, we can determine all $3$-Lie algebras induced by Lie algebras up to dimension $5$, based on classifications given in \cite{Bai:nLie:n+2} and \cite{Filippov:nLie}, then we give a list of Lie algebras up to dimension $4$ and all the possible induced $3$-Lie algebras. In Section \ref{AKMS:Examples} we present compute on 4 chosen Lie algebras and one trace map each, the set of $1$-cocycles and $1$ coboundaries of the Lie algebras and the induced $3$-Lie algebras using the computer algebra software Mathematica, the algorithm is briefly explained there too.

\section{$n$-Lie Algebras}\label{AKMS:n-Lie}
In this paper, all considered vector spaces are over a field $\mathbb{K}$ of characteristic $0$.
$n$-Lie algebras were introduced in \cite{Filippov:nLie}, then deeper investigated in \cite{Kasymov:nLie}. Let us recall of some basic definitions.
  \begin{definition}
    A  $n$-Lie algebra \index{$n$-Lie algebra} $\AKMSpara{A,\AKMSbracket{\cdot,...,\cdot}}$ is a vector space together with a skew-symmetric $n$-linear map $\AKMSbracket{\cdot,...,\cdot} : A^n \to A$ such that :
    \begin{equation}
 \AKMSbracket{x_1,...,x_{n-1},\AKMSbracket{y_1,...,y_n}} = \sum_{i=1}^n \AKMSbracket{y_1,...,\AKMSbracket{x_1,...,x_{n-1},y_i},...,y_n}. \label{AKMS-FI}
    \end{equation}
   for all $x_1,...,x_{n-1},y_1,...,y_n \in A$. This condition is called the fundamental identity or Filippov identity. For $n=2$ (\ref{AKMS-FI}) becomes the Jacobi identity and we get the definition of a Lie algebra.
  \end{definition} 

\begin{definition}
Let $(A,\AKMSbracket{\cdot,...,\cdot})$ be a $n$-Lie algebra, and $I$ a subspace of $A$. We say that $I$ is an ideal of $A$ if, for all $i\in I, x_1,...,x_{n-1}\in A$, it holds that $\AKMSbracket{i,x_1,...,x_{n-1}}\in I$.
\end{definition}

\begin{lemma}
Let $(A,\AKMSbracket{\cdot,...,\cdot})$ be a $n$-Lie algebra, and $I_1,....,I_n$ be ideals of $A$, then $I=\AKMSbracket{I_1,...,I_n}$ is an ideal of $A$.
\end{lemma}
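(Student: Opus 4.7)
The plan is to reduce to showing that for a single generator of $I=\AKMSbracket{I_1,\ldots,I_n}$, namely an element of the form $\AKMSbracket{i_1,\ldots,i_n}$ with $i_k\in I_k$, the bracket with any $n-1$ elements of $A$ lies back in $I$; linearity then covers the rest. So first I would unpack the definition: $I$ is the linear span of elements $\AKMSbracket{i_1,\ldots,i_n}$ with $i_k \in I_k$ for each $k$, and the goal is to show $\AKMSbracket{z,x_1,\ldots,x_{n-1}}\in I$ for every $z\in I$ and $x_1,\ldots,x_{n-1}\in A$.

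Next I would use skew-symmetry to rewrite $\AKMSbracket{\AKMSbracket{i_1,\ldots,i_n},x_1,\ldots,x_{n-1}}$ (up to a sign) as $\AKMSbracket{x_1,\ldots,x_{n-1},\AKMSbracket{i_1,\ldots,i_n}}$, which puts us in exactly the form where the fundamental identity (\ref{AKMS-FI}) applies. Expanding via (\ref{AKMS-FI}) yields
\begin{equation*}
\AKMSbracket{x_1,\ldots,x_{n-1},\AKMSbracket{i_1,\ldots,i_n}}=\sum_{k=1}^{n}\AKMSbracket{i_1,\ldots,i_{k-1},\AKMSbracket{x_1,\ldots,x_{n-1},i_k},i_{k+1},\ldots,i_n}.
\end{equation*}

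The crucial step is then the observation that each summand already lies in $I$: since $I_k$ is an ideal of $A$, we have $\AKMSbracket{x_1,\ldots,x_{n-1},i_k}\in I_k$, and the remaining entries $i_j$ still lie in $I_j$ for $j\neq k$, so the whole $n$-bracket is a generator of $\AKMSbracket{I_1,\ldots,I_n}=I$. Summing gives an element of $I$, and by linearity in $z$ this extends from generators to all of $I$, which completes the proof.

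I do not expect any genuine obstacle here; the argument is essentially a bookkeeping application of the Filippov identity. The only point to be mildly careful about is that one must verify the property on the spanning set $\{\AKMSbracket{i_1,\ldots,i_n}:i_k\in I_k\}$ rather than on $I$ directly, and then invoke linearity, because the ideal condition is a bilinear condition in the entries of the bracket.
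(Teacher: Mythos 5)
Your proof is correct, and it is the standard argument: the paper actually states this lemma without giving any proof, so there is nothing to compare against, but the route you take (reduce to generators $\AKMSbracket{i_1,\ldots,i_n}$, apply the fundamental identity, and use that each $\AKMSbracket{x_1,\ldots,x_{n-1},i_k}$ lands back in $I_k$ by the ideal property together with skew-symmetry) is exactly the intended one. Your closing remark about verifying the property on the spanning set and then extending by linearity is the right point of care, since $I$ is defined as the span of such brackets rather than the set of them.
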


\begin{definition}
Let $(A,\AKMSbracket{\cdot,...,\cdot})$ be a $n$-Lie algebra, and $I$ an ideal of $A$. Define the derived series of $I$ by:
\[D^0(I)=I \text{ and } D^{p+1}(I)=\AKMSbracket{D^p(I),...,D^p(I)}.\]
and the central descending series of $I$ by:
\[C^0(I)=I \text{ and } C^{p+1}(I)=\AKMSbracket{C^p(I),I,...,I}.\]
\end{definition}

\begin{definition}
Let $(A,\AKMSbracket{\cdot,...,\cdot})$ be a $n$-Lie algebra, and $I$ an ideal of $A$. $I$ is said to be solvable if there exists $p \in \mathbb{N}$ such that $D^p(I)=\{0\}$. It is said to be nilpotent if there exists $p \in \mathbb{N}$ such that $C^p(I)=\{0\}$.
\end{definition}

\begin{definition}
A $n$-Lie algebra $(A,\AKMSbracket{\cdot,...,\cdot})$ is said to be simple if $D^1(A) \neq \{ 0 \}$ and if it has no ideals other than $\{0\}$ and $A$. A direct sum of simple $n$-Lie algebras is said to be semi-simple.
\end{definition}

In \cite{ams:ternary} and \cite{ams:n} a construction of a 3-Lie algebra from a Lie algebra, and more generally a $(n+1)$-Lie algebra from a $n$-Lie algebra was introduced. We recall the main definitions and results.

\begin{definition}
  Let $\phi:A^n\to A$ be a $n$-linear map and let $\tau$ be a linear map from
  $A$ to $\mathbb{K}$. Define $\phi_\tau:A^{n+1}\to A$ by:
  \begin{align}
    \phi_\tau(x_1,...,x_{n+1}) = \sum_{k=1}^{n+1}(-1)^k\tau(x_k)\phi(x_1,...,\hat{x}_k,...,x_{n+1}),
  \end{align}
where the hat over $\hat{x}_k$ on the right hand side means that $x_{k}$ is excluded, that is 
 $\phi$ is calculated on $(x_1,\ldots, x_{k-1}, x_{k+1}, ... , x_{n+1})$. 
\end{definition}
We will not be concerned with just any linear map $\tau$,
but rather maps that have a generalized trace property. Namely:

\begin{definition}
  For $\phi:A^n\to A$ we call a linear map $\tau:A \to \mathbb{K}$ a
  \emph{$\phi$-trace (or trace)} if $\tau\AKMSpara{\phi(x_1,\ldots,x_n)}=0$ for all
  $x_1,\ldots,x_n\in A$.
\end{definition}

\begin{lemma}
  Let $\phi:A^n\to A$ be a skew-symmetric $n$-linear map and
  $\tau$ a linear map $A\to\mathbb{K}$. Then $\phi_\tau$ is a $(n+1)$-linear
  totally skew-symmetric map. Furthermore, if $\tau$ is a $\phi$-trace
  then $\tau$ is a $\phi_\tau$-trace.
\end{lemma}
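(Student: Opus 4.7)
The plan is to prove the three assertions one at a time: $(n{+}1)$-linearity, total skew-symmetry, and preservation of the trace property.

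The $(n{+}1)$-linearity of $\phi_\tau$ is immediate from the definition, since each summand is a product of the scalar $\tau(x_k)$ (linear in $x_k$) with $\phi$ evaluated on the remaining arguments ($n$-linear in those). I would simply note this in a single sentence rather than spell it out.

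For the total skew-symmetry, since we are in characteristic $0$, it suffices to show that $\phi_\tau(x_1,\ldots,x_{n+1})=0$ whenever two adjacent entries are equal, say $x_j=x_{j+1}=x$. In the defining sum I would split the index $k$ into three cases:
\begin{itemize}
\item If $k\notin\{j,j+1\}$, then the tuple $(x_1,\ldots,\widehat{x}_k,\ldots,x_{n+1})$ still contains both occurrences of $x$ in adjacent positions, so $\phi$ vanishes on it by skew-symmetry of $\phi$.
\item The $k=j$ and $k=j+1$ terms require a direct comparison: removing $x_j$ leaves $(x_1,\ldots,x_{j-1},x_{j+1},x_{j+2},\ldots,x_{n+1})$, while removing $x_{j+1}$ leaves $(x_1,\ldots,x_{j-1},x_j,x_{j+2},\ldots,x_{n+1})$. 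Because $x_j=x_{j+1}$ these two tuples are identical, so the two evaluations of $\phi$ agree, while the signs $(-1)^j$ and $(-1)^{j+1}$ are opposite and the scalars $\tau(x_j),\tau(x_{j+1})$ are equal. Hence these two terms cancel.
\end{itemize}
Thus $\phi_\tau$ vanishes on any tuple with two equal adjacent entries, which gives total skew-symmetry. The only subtle point here is lining up the sign conventions in the cancellation; I expect this to be the main (minor) obstacle.

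For the last statement, suppose $\tau$ is a $\phi$-trace. Using linearity of $\tau$,
\[
\tau\bigl(\phi_\tau(x_1,\ldots,x_{n+1})\bigr)
=\sum_{k=1}^{n+1}(-1)^k\tau(x_k)\,\tau\bigl(\phi(x_1,\ldots,\widehat{x}_k,\ldots,x_{n+1})\bigr),
\]
and each factor $\tau(\phi(\cdots))$ equals $0$ by hypothesis, so the sum vanishes and $\tau$ is a $\phi_\tau$-trace. This concludes the proof.
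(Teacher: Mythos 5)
Your proof is correct. The paper itself states this lemma without proof (it is recalled from the earlier works of Arnlind, Makhlouf and Silvestrov), and your argument is the standard one: multilinearity is immediate, the adjacent-equal-entries cancellation (using characteristic $0$ to pass from vanishing on repeated adjacent arguments to total skew-symmetry) is carried out with the signs correctly aligned, and the trace property follows by pulling $\tau$ through the defining sum.
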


\begin{theorem}
Let $(A,\phi)$ be a $n$-Lie algebra and $\tau$ a $\phi$-trace, then $(A,\phi_\tau)$ is a $(n+1)$-Lie algebra. We shall say that $(A,\phi_\tau)$ is induced by $(A,\phi)$. In particular, let $(A,\AKMSbracket{.,.})$ be a Lie algebra and $\tau : A \to \mathbb{K}$ be a trace map, the ternary bracket $\AKMSbracket{.,.,.}$ given by: $\AKMSbracket{x,y,z}=\underset{x,y,z}{\LARGE{\circlearrowleft }}\tau \AKMSpara{x} \AKMSbracket{y,z}$ defines a $3$-Lie algebra, we refer to $A$ when considering the Lie algebra and $A_\tau$ when considering induced $3$-Lie algebra.
\end{theorem}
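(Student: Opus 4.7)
By the preceding lemma, $\phi_\tau$ is already $(n+1)$-linear and totally skew-symmetric, and $\tau$ is a $\phi_\tau$-trace, so all that remains to be established is the Filippov identity for $\phi_\tau$: for all $x_1,\ldots,x_n,y_1,\ldots,y_{n+1}\in A$,
\[
\phi_\tau\bigl(x_1,\ldots,x_n,\phi_\tau(y_1,\ldots,y_{n+1})\bigr)
=\sum_{i=1}^{n+1}\phi_\tau\bigl(y_1,\ldots,\phi_\tau(x_1,\ldots,x_n,y_i),\ldots,y_{n+1}\bigr).
\]

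The plan is a direct expansion of both sides from the definition of $\phi_\tau$, leaning on two structural ingredients: the trace property $\tau\circ\phi_\tau=0$ from the preceding lemma, which eliminates every term whose outer trace factor lands on an inner $\phi_\tau$-bracket, and the Filippov identity for $\phi$ itself, which rewrites the surviving nested $\phi$-brackets. On the left-hand side, expanding the outer and inner $\phi_\tau$ in turn and discarding the one summand killed by the trace property yields a double sum, indexed by $(k,l)\in\{1,\ldots,n\}\times\{1,\ldots,n+1\}$, of terms $(-1)^{k+l}\tau(x_k)\tau(y_l)\,\phi\bigl(x_1,\ldots,\widehat{x_k},\ldots,x_n,\phi(y_1,\ldots,\widehat{y_l},\ldots,y_{n+1})\bigr)$. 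Applying the Filippov identity for $\phi$ to each such nested bracket turns it into a sum over $j\neq l$ in which the inner $n$-bracket $\phi(x_1,\ldots,\widehat{x_k},\ldots,x_n,y_j)$ sits at the slot originally occupied by $y_j$.

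Expanding the right-hand side analogously, the outer $\phi_\tau$ for each $i$ loses its $\tau(\phi_\tau(x_1,\ldots,x_n,y_i))$-summand, and the inner $\phi_\tau(x_1,\ldots,x_n,y_i)$ splits into two families: $n$ contributions of the shape $(-1)^k\tau(x_k)\phi(x_1,\ldots,\widehat{x_k},\ldots,x_n,y_i)$, and one contribution $(-1)^{n+1}\tau(y_i)\phi(x_1,\ldots,x_n)$. The first family reproduces, after the relabeling $i\leftrightarrow j$, exactly the triple sum extracted from the left-hand side. In the second family the $l=i$ slot vanishes because $\tau\circ\phi=0$, and the remaining $(i,l)$-terms cancel in pairs under the swap $(i,l)\mapsto(l,i)$: moving the common factor $\phi(x_1,\ldots,x_n)$ through the outer $\phi$-bracket picks up a sign that flips $(-1)^l$ against $(-1)^i$, producing the required cancellation.

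The main obstacle is precisely this sign and position bookkeeping: deleting a $y_l$ and inserting an $n$-bracket at some other slot shifts positional indices, and one must verify that the factors $(-1)^{k+l}$ on the two sides line up and that the pairwise cancellation in the second family goes through cleanly. Once the general identity is in place, the Lie algebra formula is obtained by specializing to $n=2$: the definition gives $\phi_\tau(x,y,z)=-\tau(x)[y,z]+\tau(y)[x,z]-\tau(z)[x,y]$, which is the negative of the cyclic sum $\underset{x,y,z}{\circlearrowleft}\tau(x)[y,z]$ appearing in the statement, and this overall sign does not affect the $3$-Lie algebra axioms.
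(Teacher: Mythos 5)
The paper never proves this theorem; it is recalled from \cite{ams:ternary} and \cite{ams:n}, so there is no internal proof to compare your argument against, and it must be judged on its own. Your direct verification is correct, and the sign bookkeeping you flag as the ``main obstacle'' does go through. On the left, once the $\tau\AKMSpara{\phi_\tau(y_1,\ldots,y_{n+1})}$-term dies, your double sum is right, and the Filippov identity for $\phi$ produces the sum over $j\neq l$ you describe; on the right, the first family matches it term by term, because replacing $y_i$ by an inner bracket and deleting $y_l$ are commuting operations on distinct slots, so with $j=i$ both the argument lists and the factors $(-1)^{k+l}$ agree. For the second family, writing $P=\phi(x_1,\ldots,x_n)$, the $(i,l)$-term ($i\neq l$, say $i<l$) carries coefficient $(-1)^{l+n+1}\tau(y_i)\tau(y_l)$ with $P$ in slot $i$ and $y_l$ deleted; moving $P$ to slot $l-1$ costs $(-1)^{l-1-i}$, so it equals $(-1)^{n+i}\tau(y_i)\tau(y_l)$ times the same $\phi$-value that the $(l,i)$-term multiplies by $(-1)^{i+n+1}\tau(y_i)\tau(y_l)$, and the pair cancels as you claim. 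Two minor remarks: your statement that ``the $l=i$ slot vanishes because $\tau\circ\phi=0$'' only parses if you expand $Z_i$ by linearity inside the outer $\tau(Z_i)$-term --- otherwise that term is already absent because $\tau$ is a $\phi_\tau$-trace, which you had also invoked, so you are in effect discarding it twice; and your closing observation is a genuine (if harmless) inconsistency in the paper, since the general $\phi_\tau$ specialized to $n=2$ is the \emph{negative} of the cyclic sum $\underset{x,y,z}{\circlearrowleft}\tau(x)\AKMSbracket{y,z}$ used throughout the rest of the text --- rescaling a bracket by $-1$ preserves skew-symmetry and the Filippov identity (which is quadratic in the bracket), so the conclusion is unaffected either way.
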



\section{Structure of 3-Lie Algebras Induced by Lie Algebras}\label{AKMS:Structure}
Let $(A,\AKMSbracket{.,.})$ be a Lie algebra, $\tau$ a $\AKMSbracket{.,.}$-trace and $(A,\AKMSbracket{.,.,.}_\tau)$ the induced 3-Lie algebra.
\begin{proposition}
If $B$ is a subalgebra of $(A,\AKMSbracket{.,.})$ then $B$ is also a subalgebra of $(A,\AKMSbracket{.,.,.}_\tau)$.
\end{proposition}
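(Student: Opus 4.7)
The proof is essentially a direct substitution into the defining formula of the induced bracket, so my plan is to make the computation explicit and then invoke closure of $B$ under the binary bracket.

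First I would fix three arbitrary elements $x,y,z \in B$ and write out
\[
 \AKMSbracket{x,y,z}_\tau = \tau(x)\AKMSbracket{y,z} - \tau(y)\AKMSbracket{x,z} + \tau(z)\AKMSbracket{x,y},
\]
which is the $n=2$ specialization of the formula defining $\phi_\tau$ (equivalently, the cyclic expression from the theorem at the end of Section~\ref{AKMS:n-Lie}). The three scalars $\tau(x),\tau(y),\tau(z)$ lie in $\mathbb{K}$, and since $B$ is a Lie subalgebra of $(A,\AKMSbracket{\cdot,\cdot})$, each of the brackets $\AKMSbracket{y,z}$, $\AKMSbracket{x,z}$, $\AKMSbracket{x,y}$ lies in $B$.

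Next I would observe that $B$, being a subalgebra, is in particular a linear subspace of $A$, and therefore closed under $\mathbb{K}$-linear combinations. Hence the right-hand side above is an element of $B$, which shows $\AKMSbracket{x,y,z}_\tau \in B$. Since $x,y,z \in B$ were arbitrary, this establishes that $B$ is closed under the ternary bracket $\AKMSbracket{\cdot,\cdot,\cdot}_\tau$ and is therefore a subalgebra of $(A,\AKMSbracket{\cdot,\cdot,\cdot}_\tau)$.

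There is no real obstacle here: the whole argument is a one-line verification that relies only on (i) the explicit formula for $\phi_\tau$ given in the previous section, (ii) the fact that $\tau$ takes values in $\mathbb{K}$, and (iii) the linear-subspace property of $B$. One subtlety worth flagging is that the statement does \emph{not} require $\tau$ to restrict to a trace on $B$ in any nontrivial sense; the subalgebra property is automatic regardless of how $\tau$ behaves on $B$. This is essentially why the analogous claim for ideals (which one expects to appear just after this proposition) will need a genuinely separate argument, while the subalgebra case reduces to pure linearity.
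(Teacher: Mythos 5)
Your proof is correct and is essentially identical to the paper's: both expand $\AKMSbracket{x,y,z}_\tau$ via the defining formula (your $-\tau(y)\AKMSbracket{x,z}$ is the paper's $+\tau(y)\AKMSbracket{z,x}$) and conclude by closure of the subspace $B$ under linear combinations of its own brackets. No differences worth noting.
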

\begin{proof}
Let $B$ be a subalgebra of $(A,\AKMSbracket{.,.})$ and $x,y,z \in B$:
\[ \AKMSbracket{x,y,z}_\tau = \tau(x)\AKMSbracket{y,z}+\tau(y)\AKMSbracket{z,x}+\tau(z)\AKMSbracket{x,y},\]
which is a linear combination of elements of $B$ and then belongs to $B$.\qed
\end{proof}

\begin{proposition}
Let $J$ be an ideal of  $(A,\AKMSbracket{.,.})$. Then $J$ is an ideal of $(A,\AKMSbracket{.,.,.}_\tau)$ if and only if :
\[  \AKMSbracket{A,A} \subseteq J \text{ or } J \subseteq \ker \tau. \]
\end{proposition}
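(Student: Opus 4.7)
The plan is to unfold the definition of $[j,x,y]_\tau$ for $j \in J$ and $x,y \in A$ and exploit the fact that $J$ is already a Lie ideal to kill two of the three resulting terms. Explicitly, expanding gives
\[
[j,x,y]_\tau = \tau(j)[x,y] + \tau(x)[y,j] + \tau(y)[j,x].
\]
Since $J$ is a Lie ideal of $A$, both $[y,j]$ and $[j,x]$ lie in $J$, so the last two terms belong to $J$ regardless of $\tau$. Hence $[j,x,y]_\tau \in J$ for all $x,y \in A$ and all $j \in J$ if and only if $\tau(j)[x,y] \in J$ for all $j \in J$ and all $x,y \in A$.

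The remaining step is a clean case split on this single scalar condition. For the ``if'' direction: if $[A,A] \subseteq J$, then $\tau(j)[x,y] \in J$ trivially; if $J \subseteq \ker\tau$, then $\tau(j) = 0$ and again $\tau(j)[x,y] = 0 \in J$. For the ``only if'' direction, I would argue by contrapositive: suppose $J \not\subseteq \ker\tau$, so there exists $j_0 \in J$ with $\tau(j_0) \neq 0$. Since $\mathbb{K}$ is a field, from $\tau(j_0)[x,y] \in J$ we may divide by $\tau(j_0)$ to obtain $[x,y] \in J$ for all $x,y \in A$, i.e.\ $[A,A] \subseteq J$.

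The only mild subtlety—and the step where one must be careful—is the direction of the quantifiers in the case split: the condition ``$\tau(j)[x,y] \in J$ for all $j,x,y$'' is a global statement about pairs, not a pointwise disjunction, so one should not try to split elementwise. Using a single $j_0$ with $\tau(j_0)\neq 0$ immediately forces the global inclusion $[A,A] \subseteq J$, and this is what makes the disjunction $[A,A] \subseteq J$ or $J \subseteq \ker\tau$ come out cleanly rather than some weaker per-element condition.
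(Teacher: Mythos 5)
Your proof is correct and follows essentially the same route as the paper: expand $[j,x,y]_\tau$, observe that two of the three terms lie in $J$ because $J$ is a Lie ideal, and reduce to the condition $\tau(j)[x,y]\in J$. In fact your handling of the quantifiers in the final step (using a single $j_0$ with $\tau(j_0)\neq 0$ to force the global inclusion $[A,A]\subseteq J$) is more careful than the paper's, which passes somewhat loosely from the pairwise condition to the disjunction.
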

\begin{proof}
Let $J$ be an ideal of $(A,\AKMSbracket{.,.})$, and let $j \in J$ and $x,y \in A$, then we have:
\[ \AKMSbracket{x,y,j}_\tau = \tau(x) \AKMSbracket{y,j} + \tau(y) \AKMSbracket{j,x} + \tau(j) \AKMSbracket{x,y}. \]
We have that $\tau(x) \AKMSbracket{y,j} + \tau(y) \AKMSbracket{j,x} \in J$, then, to have $\AKMSbracket{x,y,j}_\tau \in J$ it is necessary and sufficient to have $\tau(j) \AKMSbracket{x,y} \in J$, which is equivalent to $\tau(j) = 0$ or $\AKMSbracket{x,y} \in J$. \qed
\end{proof}

\begin{theorem} \label{AKMSsolv2}
Let $(A,\AKMSbracket{.,.})$ be a Lie algebra, $\tau$ a $\AKMSbracket{.,.}$-trace and $(A,\AKMSbracket{.,.,.}_\tau)$ the induced 3-Lie algebra. The $3$-Lie algebra $(A,\AKMSbracket{.,.,.}_\tau)$ is solvable, more precisely $D^2(A_\tau) = 0$ i.e. $\left(D^1(A_\tau)=\AKMSbracket{A,A,A}_\tau,\AKMSbracket{.,.,.}_\tau\right)$ is abelian.
\end{theorem}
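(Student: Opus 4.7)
The plan is to show that $D^1(A_\tau)$ lies inside $\ker\tau$, and then use the explicit formula for $[\cdot,\cdot,\cdot]_\tau$ to conclude that the triple bracket vanishes identically on $D^1(A_\tau)$.

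First I would unpack the definition of $D^1(A_\tau) = [A,A,A]_\tau$. Every generator $[x,y,z]_\tau = \tau(x)[y,z] + \tau(y)[z,x] + \tau(z)[x,y]$ is a $\mathbb{K}$-linear combination of Lie brackets, so
\[
D^1(A_\tau) \subseteq [A,A].
\]
This is the key structural observation, and the trace property does the rest: since $\tau$ is a $[\cdot,\cdot]$-trace we have $\tau([a,b]) = 0$ for all $a,b \in A$, hence $\tau$ vanishes on the whole of $[A,A]$ and in particular
\[
\tau(u) = 0 \qquad \text{for every } u \in D^1(A_\tau).
\]

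Next, take any three elements $u,v,w \in D^1(A_\tau)$ and evaluate the induced ternary bracket directly:
\[
[u,v,w]_\tau = \tau(u)[v,w] + \tau(v)[w,u] + \tau(w)[u,v] = 0,
\]
because every $\tau$-coefficient on the right-hand side is zero by the previous step. This shows $D^2(A_\tau) = [D^1(A_\tau),D^1(A_\tau),D^1(A_\tau)]_\tau = \{0\}$, which is exactly the assertion that $D^1(A_\tau)$ is abelian as a $3$-Lie subalgebra, and hence $A_\tau$ is solvable (with derived length at most $2$).

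There is essentially no obstacle here: the whole argument is the two-line observation that $D^1(A_\tau)\subseteq[A,A]\subseteq\ker\tau$, combined with the fact that the induced bracket is built entirely out of $\tau$-values on its arguments. The only thing worth being careful about is that $D^1(A_\tau)$ is a subspace (not just a set of generators), but since $\tau$ and the Lie bracket are both linear, the inclusion $D^1(A_\tau)\subseteq[A,A]$ and the vanishing of $\tau$ on it extend from generators to the whole subspace without further work.
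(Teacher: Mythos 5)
Your proof is correct and follows essentially the same route as the paper: both arguments reduce to the observation that $\tau$ vanishes on every element of $D^1(A_\tau)$, so the induced ternary bracket of three such elements is zero term by term. The only cosmetic difference is that you derive this vanishing from $D^1(A_\tau)\subseteq[A,A]\subseteq\ker\tau$, while the paper invokes directly that $\tau$ is also a trace for the induced ternary bracket; your explicit remark about passing from generators to the full subspace is a welcome point of care that the paper glosses over.
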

\begin{proof}
Let $x,y,z\in \AKMSbracket{A,A,A}_\tau$, $x=\AKMSbracket{x_1,x_2,x_3}_\tau$,  $y=\AKMSbracket{y_1,y_2,y_3}_\tau$ and  $z=\AKMSbracket{z_1,z_2,z_3}_\tau$, then:
\begin{align*}
\AKMSbracket{x,y,z}_\tau &= \tau\AKMSpara{\AKMSbracket{x_1,x_2,x_3}_\tau}\AKMSbracket{\AKMSbracket{y_1,y_2,y_3}_\tau,\AKMSbracket{z_1,z_2,z_3}_\tau}\\
 &+  \tau\AKMSpara{\AKMSbracket{y_1,y_2,y_3}_\tau}\AKMSbracket{\AKMSbracket{z_1,z_2,z_3}_\tau,\AKMSbracket{x_1,x_2,x_3}_\tau}\\
 &+ \tau\AKMSpara{\AKMSbracket{z_1,z_2,z_3}_\tau}\AKMSbracket{\AKMSbracket{x_1,x_2,x_3}_\tau,\AKMSbracket{y_1,y_2,y_3}_\tau} \\
 &= 0. 
\end{align*}
Because $\tau\left([.,.,.]\right)=0$.
\qed
\end{proof}

\begin{remark}[\cite{Filippov:nLie}]
Let $(A,\AKMSbracket{.,.,.})$ be a $3$-Lie algebra. If we fix $a \in A$, the bracket \[ \AKMSbracket{.,.}_a = \AKMSbracket{a,.,.} \] is skew-symmetric and satisfies Jacobi identity. Indeed, we have, for $x,y,z \in A$:
\begin{align*}
\AKMSbracket{x,\AKMSbracket{y,z}_a}_a &= \AKMSbracket{a,x,\AKMSbracket{a,y,z}}\\
&= \AKMSbracket{\AKMSbracket{a,x,a},y,z} + \AKMSbracket{a,\AKMSbracket{a,x,y},z} + \AKMSbracket{a,y,\AKMSbracket{a,x,z}}\\
&= \AKMSbracket{a,\AKMSbracket{a,x,y},z} + \AKMSbracket{a,y,\AKMSbracket{a,x,z}}\\
&= \AKMSbracket{\AKMSbracket{x,y}_a,z}_a  + \AKMSbracket{x,\AKMSbracket{y,z}_a}_a
\end{align*}

\end{remark}

\begin{proposition}
Let $\AKMSpara{A,\AKMSbracket{.,.}}$ be a Lie algebra, $\tau$ be a trace and $\AKMSpara{A,\AKMSbracket{.,.,.}_\tau}$ the induced algebra, let $\AKMSpara{C^p(A)}$ be the central descending series of $\AKMSpara{A,\AKMSbracket{.,.}}$, and $\AKMSpara{C^p(A_\tau)}$ be the central descending series of $\AKMSpara{A,\AKMSbracket{.,.,.}_\tau}$. then we have : 
\[ C^p(A_\tau) \subset C^p(A), \forall p \in \mathbb{N}. \]
If there exists $i \in A$ such that $\AKMSbracket{i,x,y}_\tau = \AKMSbracket{x,y}, \forall x,y \in A$ then:
\[ C^p(A_\tau) = C^p(A), \forall p \in \mathbb{N}. \]
\end{proposition}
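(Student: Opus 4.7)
The plan is induction on $p$, with common base $p=0$ provided by $C^0(A_\tau) = A = C^0(A)$. The computational workhorse throughout is the defining formula
\[\AKMSbracket{u,x,y}_\tau = \tau(u)\AKMSbracket{x,y} + \tau(x)\AKMSbracket{y,u} + \tau(y)\AKMSbracket{u,x}.\]

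For the first inclusion $C^p(A_\tau) \subset C^p(A)$, the step $p=0 \to p=1$ is immediate, since every $\AKMSbracket{u,x,y}_\tau$ is a linear combination of Lie brackets and hence sits in $\AKMSbracket{A,A} = C^1(A)$. For the step $p \to p+1$ with $p \geq 1$, the crucial observation is that any $u \in C^p(A_\tau)$ lies in the image of $\AKMSbracket{\cdot,\cdot,\cdot}_\tau$, so $\tau(u) = 0$ by the earlier lemma stating that $\tau$ is a $\phi_\tau$-trace. This kills the first summand, and the induction hypothesis $C^p(A_\tau) \subset C^p(A)$ places the remaining two summands in $\AKMSbracket{A, C^p(A)} \subset C^{p+1}(A)$.

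For the equality under the assumption that some $i \in A$ satisfies $\AKMSbracket{i,x,y}_\tau = \AKMSbracket{x,y}$ for all $x,y\in A$, the inclusion $\subset$ is just the first part. For $\supset$, I would induct again: given a generator $\AKMSbracket{u,v}$ of $C^{p+1}(A) = \AKMSbracket{C^p(A), A}$ with $u \in C^p(A) = C^p(A_\tau)$ by the inductive hypothesis, rewrite $\AKMSbracket{u,v} = \AKMSbracket{i,u,v}_\tau = -\AKMSbracket{u,i,v}_\tau$ using the hypothesis on $i$ together with skew-symmetry of the ternary bracket; this exhibits $\AKMSbracket{u,v}$ as an element of $\AKMSbracket{C^p(A_\tau), A, A}_\tau = C^{p+1}(A_\tau)$.

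The only point demanding real attention is the treatment of the $\tau(u)\AKMSbracket{x,y}$ term in the first inclusion: a priori this term only lands in $C^1(A)$, so the base step $p=0 \to p=1$ must be handled separately from the inductive step for $p \geq 1$, and the vanishing of $\tau$ on the image of $\AKMSbracket{\cdot,\cdot,\cdot}_\tau$ is precisely what makes the induction close for $p \geq 1$. Beyond this observation, everything is direct expansion using the bracket formula and the definitions of the two central descending series.
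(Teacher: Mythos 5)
Your argument is correct and follows essentially the same route as the paper: induction on $p$, with the case $p=1$ treated separately because the $\tau(u)\AKMSbracket{x,y}$ term need not vanish there, the observation that $\tau$ vanishes on $C^p(A_\tau)$ for $p\geq 1$ closing the inductive step, and the identity $\AKMSbracket{u,v}=\AKMSbracket{i,u,v}_\tau=\AKMSbracket{u,v,i}_\tau$ giving the reverse inclusion. No substantive differences.
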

\begin{proof}
We proceed by induction over $p \in \mathbb{N}$. The case of $p=0$ is trivial, for $p=1$ we have:
\[\forall x = \AKMSbracket{a,b,c}_\tau \in C^1(A_\tau), x=\tau(a) \AKMSbracket{b,c} + \tau(b) \AKMSbracket{c,a} + \tau(c) \AKMSbracket{a,b}, \] which is a linear combination of elements of $C^1(A)$ and then is an elements of $C^1(A)$.
Suppose now that there exists $i \in A$ such that $\AKMSbracket{i,x,y}_\tau = \AKMSbracket{x,y}, \forall x,y \in A$, then for $x = \AKMSbracket{a,b} \in C^1(A)$, $x=\AKMSbracket{i,a,b}_\tau$ and then is an element of $C^1(A_\tau)$.

Now, we suppose this proposition is true for some $p \in \mathbb{N}$, and let $x \in C^{p+1}(A_\tau)$, then $x=\AKMSbracket{a,u,v}_\tau$ with $u,v \in A$ and $a \in C^{p}(A_\tau)$
\[x=\AKMSbracket{a,u,v}_\tau = \tau(u)\AKMSbracket{v,a} + \tau(v) \AKMSbracket{a,u} \qquad (\tau(a)=0)\]
which is an element of $C^{p+1}(A)$ because $a \in C^{p}(A_\tau) \subset C^p(A)$.
If there exists $i \in A$ such that $\AKMSbracket{i,x,y}_\tau = \AKMSbracket{x,y}, \forall x,y \in A$ then, if $x \in C^{p+1}(A)$ then $x = \AKMSbracket{a,u}$ with $a \in C^{p}(A)$ and $u \in A$ and we have: \[ x = \AKMSbracket{a,u} = \AKMSbracket{i,a,u}_\tau = \AKMSbracket{a,u,i}_\tau \in C^{p+1}(A_\tau).\] \qed 
\end{proof}

\begin{remark} \label{AKMS-D3subD}
It also results from the preceding proposition that $D^1(A_\tau) = \AKMSbracket{A,A,A}_\tau \subset D^1(A) = \AKMSbracket{A,A}$, and that if there exists $i \in A$ such that $\AKMSbracket{i,x,y}_\tau = \AKMSbracket{x,y}, \forall x,y \in A$, then $D^1 (A_\tau) = D^1(A)$. For the rest of the derived series, we have obviously the first inclusion by Theorem \ref{AKMSsolv2}.
\end{remark}

\begin{theorem}
Let $\AKMSpara{A,\AKMSbracket{.,.}}$ be a Lie algebra, $\tau$ be a trace and $\AKMSpara{A,\AKMSbracket{.,.,.}_\tau}$ the induced algebra, then we have :
\[ \AKMSpara{A,\AKMSbracket{.,.}} \text{is nilpotent of class } p \implies \AKMSpara{A,\AKMSbracket{.,.,.}_\tau} \text{is nilpotent of class at most } p. \]
Moreover, if there exists $i \in A$ such that $\AKMSbracket{i,x,y}_\tau = \AKMSbracket{x,y}, \forall x,y \in A$ then:
\[ \AKMSpara{A,\AKMSbracket{.,.}} \text{is nilpotent of class } p \iff \AKMSpara{A,\AKMSbracket{.,.,.}_\tau} \text{is nilpotent of class } p. \]
\end{theorem}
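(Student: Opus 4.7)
The plan is to derive this theorem directly from the preceding proposition, which already contains the technical heart of the result. That proposition gives the inclusion $C^p(A_\tau) \subset C^p(A)$ for all $p \in \mathbb{N}$, together with the equality $C^p(A_\tau) = C^p(A)$ under the hypothesis that there exists $i \in A$ with $[i,x,y]_\tau = [x,y]$ for all $x,y \in A$. So the proof reduces to translating these statements about the descending central series into statements about nilpotency class.

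For the first implication, I would assume $(A,[.,.])$ is nilpotent of class $p$, which by definition means $C^p(A) = \{0\}$. Applying the inclusion from the preceding proposition immediately yields $C^p(A_\tau) \subset C^p(A) = \{0\}$, so $C^p(A_\tau) = \{0\}$ and $(A, [.,.,.]_\tau)$ is nilpotent of class at most $p$. I deliberately do not claim equality here, since the inclusion may be strict and the induced $3$-Lie algebra could conceivably be nilpotent of strictly smaller class.

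For the equivalence under the existence of $i$, I would invoke the second part of the proposition to get $C^p(A_\tau) = C^p(A)$ for every $p$. Then $C^p(A) = \{0\}$ if and only if $C^p(A_\tau) = \{0\}$, and similarly $C^{p-1}(A) \neq \{0\}$ if and only if $C^{p-1}(A_\tau) \neq \{0\}$. Combining these two equivalences shows that the smallest $p$ at which the central descending series vanishes is the same for the Lie algebra and the induced $3$-Lie algebra, which is exactly the statement that the nilpotency classes coincide.

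There is no real obstacle here; the whole argument is a direct application of the previous proposition. The only minor point requiring care is the distinction between ``nilpotent of class $p$'' and ``nilpotent of class at most $p$''; respecting this distinction is what forces the first statement to be a one-sided implication and allows the second, under the additional assumption, to be an equivalence.
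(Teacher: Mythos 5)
Your proposal is correct and follows essentially the same route as the paper: both parts are direct consequences of the preceding proposition, using $C^p(A_\tau)\subset C^p(A)$ for the one-sided implication and the equality $C^p(A_\tau)=C^p(A)$ (together with the non-vanishing of $C^{p-1}$) for the equivalence of nilpotency classes. Your explicit attention to the distinction between ``class $p$'' and ``class at most $p$'' matches the paper's argument.
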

\begin{proof} 
\begin{enumerate}
\item Suppose that $\AKMSpara{A,\AKMSbracket{.,.}}$ is nilpotent of class $p\in \mathbb{N}$, then $C^p(A)=\{0\}$. By the preceding proposition, $C^p (A_\tau) \subseteq C^p(A)=\{0\}$, therefore $\AKMSpara{A,\AKMSbracket{.,.,.}_\tau}$ is nilpotent of class at most $p$.
\item We suppose now that $\AKMSpara{A,\AKMSbracket{.,.,.}_\tau}$ is nilpotent of class $p \in \mathbb{N}$, and that there exists $i \in A$ such that $\AKMSbracket{i,x,y}_\tau = \AKMSbracket{x,y}, \forall x,y \in A$, then $C^p(A_\tau)=\{0\}$. By the preceding proposition, $C^p(A) = C^p(A_\tau)=\{0\}$. Therefore $\AKMSpara{A,\AKMSbracket{.,.}}$ is nilpotent, since $C^{p-1}(A) = C^{p-1}(A_\tau) \neq \{0\}$,  $\AKMSpara{A,\AKMSbracket{.,.,.}_\tau}$ and $\AKMSpara{A,\AKMSbracket{.,.}}$ have the same nilpotency class.
\end{enumerate}\qed
\end{proof}


\section{Lie and $3$-Lie Algebras Cohomology}\label{AKMS:Cohomology}
In this section, we study the connections between  the Chevalley-Eilenberg cohomology for Lie algebras and the cohomology of $3$-Lie algebras induced by Lie algebras.

Now, let us recall the main definitions of Lie algebras and $n$-Lie algebras cohomology, for reference and further details, see \cite{Dal_Takh}, \cite{aip:review}, \cite{Gautheron:Rem} and \cite{Takhtajan:cohomology}.

\begin{definition}
Let $(A,\AKMSbracket{.,.})$ be a Lie algebra, $\rho$ a representation of $A$ in a vector space $M$. A $M$-valued $p$-cochain on $A$ is a skew-symmetric $p$-linear map $\varphi : A^p \to M$, the set of $M$-valued $p$-cochain is denoted by $C^p(A,M)$.

The coboundary operator is the linear map $\delta^p : C^p(A,M) \to C^{p+1}(A,M)$ given by :
\begin{align*}
\delta^p\varphi(x_1,...,x_{p+1}) &= \sum_{j=1}^{p+1} (-1)^{j+1} \rho (x_k)\varphi(x_1,...,\hat{x}_j,...,x_{p+1}) \\
&+ \sum_{j=1}^{p+1} \sum_{k=j+1}^{p+1} (-1)^{j+k}\varphi([x_j,x_k],x_1,...,\hat{x}_j,...,\hat{x}_k,...,x_{p+1}).
\end{align*}

We will study two particular cases, the adjoint cohomology $M=A, \rho = ad$ and the scalar cohomology $M = \mathbb{K}, \rho = 0$.
\end{definition}
\begin{definition}
Let $(A,[.,.,.])$ be a $3$-Lie algebra, an $A$-valued $p$-cochain is a linear map $\psi : (\wedge^2 A)^{\otimes p-1} \wedge A \to A$.
\end{definition}
\begin{definition}
 The coboundary operator for the adjoint action is given by :
\begin{align*}
d^p\psi(x_1,...,x_{2p+1}) &= \sum_{j=1}^p \sum_{k=2j+1}^{2p+1} (-1)^j \psi (x_1,...,\hat{x}_{j-1},\hat{x}_j,...ad_{a_j}x_k,...,x_{2p+1}) \\
&+ \sum_{k=1}^{p} (-1)^{k-1} ad_{a_k}\psi(x_1,...,\hat{x}_{2k-1},\hat{x}_{2k},...,x_{2p+1}) \\
&+ (-1)^{p+1} \AKMSbracket{x_{2p-1},\psi(x_1,...,x_{2p-2},x_{2p}),x_{2p+1}}\\
&+ (-1)^{p+1} \AKMSbracket{\psi(x_1,...,x_{2p-1},x_{2p},x_{2p+1}},
\end{align*}
where $a_k=(x_{2k-1},x_{2k})$.
\end{definition}

\begin{definition}
Let $(A,[.,.,.])$ be a $3$-Lie algebra, a $\mathbb{K}$-valued $p$-cochain is a linear map $\psi : (\wedge^2 A)^{\otimes p-1} \wedge A \to \mathbb{K}$.
\end{definition}
\begin{definition}
 The coboundary operator for the trivial action is given by :
\begin{align*}
d^p\psi(x_1,...,x_{2p+1}) =& \sum_{j=1}^p \sum_{k=2j+1}^{2p+1} (-1)^j \psi (x_1,...,\hat{x}_{j-1},\hat{x}_j,...ad_{a_j}x_k,...,x_{2p+1}), \\
\end{align*}
where $a_k=(x_{2k-1},x_{2k})$.
\end{definition}

The elements of $Z^p(A,M) = \ker \delta^p$ are called $p$-cocycles, those of $B^n(A,M)= \operatorname{Im} \delta^{p-1}$ are called coboundaries. $H^p(A,M)=\frac{Z^p(A,M)}{B^n(A,M)}$ is the $p$-th cohomology group. We sometimes add in subscript the representation used in the cohomology complex, for example $Z_{ad}^p(A,A)$ denotes the set of $p$-cocycle for the adjoint cohomology and $Z_{0}^p(A,\mathbb{K})$  denotes the set of $p$-cocycle for the scalar cohomology.

In particular, the elements of $Z^1(A,A)$ are the derivations. Recall that a derivation of a $n$-Lie algebra is a linear map $f : A \to A$ satisfying:
\[ f\AKMSpara{\AKMSbracket{x_1,...,x_n}} = \sum_{i=1}^n \AKMSbracket{x_1,...,f(x_i),...,x_n},\forall x_1,...,x_n \in A. \]

\subsection{Derivations and 2-cocycles correspondence}
Let $(A,\AKMSbracket{.,.})$ be a Lie algebra, $\tau$ a $\AKMSbracket{.,.}$-trace and $(A,\AKMSbracket{.,.,.}_\tau)$ the induced 3-Lie algebra, then we have the following correspondence between 1 and 2-cocycles of $(A,\AKMSbracket{.,.})$ and those of $(A,\AKMSbracket{.,.,.}_\tau)$.

\begin{lemma}
Let $f : A \to A$ be a Lie algebra derivation, then $\tau \circ f$ is a $\AKMSbracket{.,.}$-trace. 
\end{lemma}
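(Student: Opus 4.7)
The plan is to unpack what it means for $\tau \circ f$ to be a $[.,.]$-trace and then use the derivation property of $f$ together with the trace property of $\tau$ in a single short calculation. By definition, I need to show that $(\tau \circ f)([x,y]) = 0$ for all $x,y \in A$.

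First, I would apply the derivation identity to rewrite $f([x,y])$ as $[f(x),y] + [x,f(y)]$. Then, applying $\tau$ and using its linearity, this becomes $\tau([f(x),y]) + \tau([x,f(y)])$. The key observation is that each of these two terms is $\tau$ evaluated on a bracket of elements of $A$, and since $\tau$ is by assumption a $[.,.]$-trace, both terms vanish.

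There is essentially no obstacle here; the statement is an immediate consequence of chaining the two defining identities. The only thing worth emphasizing in the write-up is that the derivation condition from $Z^1(A,A)$ takes the standard Leibniz form $f([x,y]) = [f(x),y] + [x,f(y)]$, which is exactly what one reads off from the Chevalley--Eilenberg coboundary formula $\delta^1 f(x,y) = [x,f(y)] - [y,f(x)] - f([x,y]) = 0$ specialized to the adjoint representation.
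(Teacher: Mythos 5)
Your proposal is correct and follows exactly the paper's own argument: expand $f([x,y])=[f(x),y]+[x,f(y)]$ by the derivation property, apply $\tau$ linearly, and note that each term vanishes because $\tau$ is a $[.,.]$-trace. No gaps.
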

\begin{proof}
for all $x,y \in A$, we have :
\begin{align*}
\tau\AKMSpara{ f\AKMSpara{ \AKMSbracket{x,y} } } &= \tau\AKMSpara{ \AKMSbracket{f(x),y} + \AKMSbracket{x,f(y)} }
=\tau\AKMSpara{ \AKMSbracket{f(x),y} } + \tau\AKMSpara{ \AKMSbracket{x,f(y)} } = 0.
\end{align*} \qed
\end{proof}

\begin{theorem}
Let $f:A \to A$ be a derivation of the Lie algebra $A$, then $f$ is a derivation of the induced 3-Lie algebra if and only if:
\[ \AKMSbracket{x,y,z}_{\tau \circ f} =0, \forall x,y,z \in A. \]

\end{theorem}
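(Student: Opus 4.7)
The plan is to verify the 3-Lie derivation identity directly by expanding both sides of
\[
f\bigl([x,y,z]_\tau\bigr) = [f(x),y,z]_\tau + [x,f(y),z]_\tau + [x,y,f(z)]_\tau,
\]
using the definition $[x,y,z]_\tau = \tau(x)[y,z]+\tau(y)[z,x]+\tau(z)[x,y]$, and showing that the difference equals exactly $[x,y,z]_{\tau\circ f}$. This gives the equivalence for free: $f$ is a derivation of $(A,[.,.,.]_\tau)$ iff this difference vanishes for all $x,y,z$.

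First I would expand the left-hand side. Since $\tau(x),\tau(y),\tau(z)$ are scalars, $f$ pulls inside each term, and then the Lie derivation property of $f$ splits each bracket, yielding six terms of the shape $\tau(u)[f(v),w]$ where $(u,v,w)$ ranges over the appropriate arrangements of $(x,y,z)$. Next I would expand the right-hand side using the definition of $[.,.,.]_\tau$; this produces nine terms, three from each summand. Six of them have the form $\tau(u)[f(v),w]$ and match the six terms from the left-hand side exactly; the remaining three are precisely $\tau(f(x))[y,z]$, $\tau(f(y))[z,x]$, and $\tau(f(z))[x,y]$.

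Collecting the leftover terms gives
\[
\tau(f(x))[y,z]+\tau(f(y))[z,x]+\tau(f(z))[x,y] = [x,y,z]_{\tau\circ f},
\]
where the last equality is the definition of the induced bracket associated to the linear form $\tau\circ f$. By the preceding lemma, $\tau\circ f$ is indeed a $[.,.]$-trace, so $[.,.,.]_{\tau\circ f}$ makes sense. Hence the derivation defect of $f$ with respect to $[.,.,.]_\tau$ equals $[x,y,z]_{\tau\circ f}$, and the equivalence follows immediately.

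The only real obstacle is keeping track of the twelve-odd terms and the cyclic signs correctly when equating the two expansions; once the bookkeeping is done, the identification of the residue as $[x,y,z]_{\tau\circ f}$ is immediate. No additional structural input beyond the Lie derivation property of $f$ and the definition of the induced bracket is needed.
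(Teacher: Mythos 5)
Your proposal is correct and is essentially the paper's own argument: the paper also expands $f([x,y,z]_\tau)$ via the Lie derivation property and, by adding and subtracting the terms $\tau(f(x))[y,z]+\tau(f(y))[z,x]+\tau(f(z))[x,y]$, identifies the derivation defect as exactly $-[x,y,z]_{\tau\circ f}$. Your bookkeeping (matching the six terms $\tau(u)[f(v),w]$ and isolating the three leftover terms) is the same computation organized slightly differently, and your appeal to the preceding lemma that $\tau\circ f$ is a trace matches the paper's setup.
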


\begin{proof}
Let $f$ be a derivation of $A$ and $x,y,z \in A$:
\begin{align*}
f\AKMSpara{\AKMSbracket{x,y,z}_\tau} &= \tau(x)f\AKMSpara{\AKMSbracket{y,z}}+\tau(y)f\AKMSpara{\AKMSbracket{z,x}}+\tau(z)f\AKMSpara{\AKMSbracket{x,y}} \\
&= \tau(x)\AKMSbracket{f(y),z}+\tau(y)\AKMSbracket{f(z),x}+\tau(z)\AKMSbracket{f(x),y}\\
&+ \tau(x)\AKMSbracket{y,f(z)}+\tau(y)\AKMSbracket{z,f(x)}+\tau(z)\AKMSbracket{x,f(y)}\\
&+ \tau(f(x))\AKMSbracket{y,z}+\tau(f(y))\AKMSbracket{z,x}+\tau(f(z))\AKMSbracket{x,y}\\
&- \tau(f(x))\AKMSbracket{y,z}+\tau(f(y))\AKMSbracket{z,x}+\tau(f(z))\AKMSbracket{x,y}\\
&= \AKMSbracket{f(x),y,z}_\tau + \AKMSbracket{x,f(y),z}_\tau + \AKMSbracket{x,y,f(z)}_\tau - \AKMSbracket{x,y,z}_{\tau \circ f}. \qed
\end{align*}
\end{proof}

\begin{theorem}\label{AKMS-Z2ad}
Let $\varphi \in Z^2_{ad}(A,A)$ and $\omega : A \to \mathbb{K}$ be a linear map satisfying :
\begin{enumerate}
\item $\tau(x)\omega(y) = \tau(y)\omega(x)$,
\item $\omega([x,y])=0$,
\item $\underset{x,y,z}{\LARGE{\circlearrowleft }}\omega \AKMSpara{x} \tau \AKMSpara{ \varphi \AKMSpara{ y,z} } = 0$.
\end{enumerate}
Then $\psi\AKMSpara{x,y,z} = \underset{x,y,z}{\LARGE{\circlearrowleft }}\omega \AKMSpara{x} \varphi \AKMSpara{y,z}$ is a 2-cocycle of the induced 3-Lie algebra.
\end{theorem}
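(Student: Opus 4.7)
The plan is to establish the 2-cocycle condition $d^2\psi(x_1,\ldots,x_5)=0$ by direct expansion. First I would write out
$\psi(x,y,z)=\omega(x)\varphi(y,z)+\omega(y)\varphi(z,x)+\omega(z)\varphi(x,y)$
and unfold the induced bracket $[a,b,c]_\tau=\tau(a)[b,c]+\tau(b)[c,a]+\tau(c)[a,b]$ at every occurrence inside the 3-Lie coboundary. This produces a large sum of monomials, each a product of scalar factors (values of $\omega$, $\tau$, or $\tau\circ\varphi$) with a vector factor of the form $[\cdot,\varphi(\cdot,\cdot)]$, $\varphi([\cdot,\cdot],\cdot)$, $[\cdot,\cdot]$, or $\varphi(\cdot,\cdot)$.

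Next I would sort the resulting monomials into three classes aligned with the three hypotheses on $\omega$. Any monomial containing a factor $\omega([\cdot,\cdot])$ vanishes immediately by condition (2). The monomials containing a factor $\tau(\varphi(\cdot,\cdot))$ should be regrouped into expressions of the shape $\underset{}{\circlearrowleft}\omega(\cdot)\tau(\varphi(\cdot,\cdot))\cdot(\text{vector})$, where the cyclic sum is over three of the five arguments; these vanish by condition (3). To arrange this regrouping cleanly, condition (1), $\tau(x)\omega(y)=\tau(y)\omega(x)$, is used to swap $\omega$ and $\tau$ across arguments so that monomials that a priori carry $\omega$ and $\tau$ on the ``wrong'' pair become collectable into the required cyclic triples.

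The remaining terms — products of one $\omega$, one $\tau$, and a Lie-theoretic piece of the form $[\cdot,\varphi(\cdot,\cdot)]$ or $\varphi([\cdot,\cdot],\cdot)$ — will, for each fixed pair carrying the scalar factors, assemble into an instance of the Chevalley--Eilenberg 2-coboundary $\delta^2\varphi$ applied to the three remaining arguments. Since $\varphi\in Z^2_{ad}(A,A)$, each such instance is zero, and the remainder of the sum collapses.

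The main obstacle is bookkeeping rather than depth. The full expansion of $d^2\psi(x_1,\ldots,x_5)$ yields a couple of dozen monomials with delicate sign and index patterns, and one has to verify that the $\tau\circ\varphi$ factors group correctly into cyclic triples for condition (3), that the swaps provided by condition (1) exactly match the pair-dependence of the partner monomials, and that the ``Lie'' residue lines up precisely with $\delta^2\varphi$ evaluated on the correct arguments. Once this combinatorial sorting is carried out, each of the three conditions on $\omega$ eliminates its designated family of terms, the cocycle property of $\varphi$ kills the rest, and $d^2\psi=0$ follows.
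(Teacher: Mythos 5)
Your plan coincides with the paper's proof: the paper likewise expands $d^2\psi(x_1,x_2,y_1,y_2,z)$ monomial by monomial, silently discards the $\omega([\cdot,\cdot])$ factors via condition (2), reassembles the surviving vector-valued terms into six multiples of $\delta^2\varphi$ (which vanish because $\varphi\in Z^2_{ad}(A,A)$), and collects the remaining $\tau\circ\varphi$ terms into four cyclic triples of exactly the shape killed by condition (3). The only caveat is that your text is a roadmap rather than the computation itself — the entire substance of the theorem is the sign-and-index bookkeeping you defer, which the paper carries out explicitly.
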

\begin{proof}
Let $\varphi \in Z^2_{ad}(A,A)$ and $\omega : A \to \mathbb{K}$ a linear map satisfying conditions 1,2 and 3 above, and let $\psi\AKMSpara{x,y,z} = \underset{x,y,z}{\circlearrowleft}\omega \AKMSpara{x} \varphi \AKMSpara{y,z}$, then we have:

\begin{align*}
d^2 \psi&\AKMSpara{x_1,x_2,y_1,y_2,z} = \psi \AKMSpara{x_1,x_2,\AKMSbracket{y_1,y_2,z}_\tau} - \psi \AKMSpara{\AKMSbracket{x_1,x_2,y_1}_\tau,y_2,z} \\
 &- \psi\AKMSpara{y_1,\AKMSbracket{x_1,x_2,y_2}_\tau,z} - \psi\AKMSpara{y_1,y_2,\AKMSbracket{x_1,x_2,z}_\tau} + \AKMSbracket{x_1,x_2,\psi\AKMSpara{y_1,y_2,z}}_\tau\\
& - \AKMSbracket{\psi\AKMSpara{x_1,x_2,y_1},y_2,z}_\tau - \AKMSbracket{y_1,\psi\AKMSpara{x_1,x_2,y_2},z}_\tau - \AKMSbracket{y_1,y_2,\psi\AKMSpara{x_1,x_2,z}}_\tau\\
&=\tau\AKMSpara{y_1}\psi\AKMSpara{x_1,x_2,\AKMSbracket{y_2,z}} + \tau\AKMSpara{y_2}\psi\AKMSpara{x_1,x_2,\AKMSbracket{z,y_1}} + \tau\AKMSpara{z}\psi\AKMSpara{x_1,x_2,\AKMSbracket{y_1,y_2}} \\
& - \tau\AKMSpara{x_1}\psi\AKMSpara{y_1,y_2,\AKMSbracket{x_2,z}} - \tau\AKMSpara{x_2}\psi\AKMSpara{y_1,y_2,\AKMSbracket{z,x_1}} - \tau\AKMSpara{z}\psi\AKMSpara{y_1,y_2,\AKMSbracket{x_1,x_2}} \\
& - \tau\AKMSpara{x_1}\psi\AKMSpara{\AKMSbracket{x_2,y_1}y_2,z,} - \tau\AKMSpara{x_2}\psi\AKMSpara{\AKMSbracket{y_1,x_1}y_2,z,} - \tau\AKMSpara{y_1}\psi\AKMSpara{\AKMSbracket{x_1,x_2}y_2,z,} \\
& - \tau\AKMSpara{x_1}\psi\AKMSpara{y_1,\AKMSbracket{x_2,y_2},z} - \tau\AKMSpara{x_2}\psi\AKMSpara{y_1,\AKMSbracket{y_2,x_1},z} - \tau\AKMSpara{y_2}\psi\AKMSpara{y_1,\AKMSbracket{x_1,x_2},z} \\
& + \tau\AKMSpara{x_1}\AKMSbracket{x_2,\psi\AKMSpara{y_1,y_2,z}} + \tau\AKMSpara{x_2}\AKMSbracket{\psi\AKMSpara{y_1,y_2,z},x_1} + \tau\AKMSpara{\psi\AKMSpara{y_1,y_2,z}}\AKMSbracket{x_1,x_2} \\
& - \tau\AKMSpara{y_1}\AKMSbracket{y_2,\psi\AKMSpara{x_1,x_2,z}} - \tau\AKMSpara{y_2}\AKMSbracket{\psi\AKMSpara{x_1,x_2,z},y_1} - \tau\AKMSpara{\psi\AKMSpara{x_1,x_2,z}}\AKMSbracket{y_1,y_2}\\
& - \tau\AKMSpara{\psi\AKMSpara{x_1,x_2,y_1}}\AKMSbracket{y_2,z} - \tau\AKMSpara{y_2}\AKMSbracket{z,\psi\AKMSpara{x_1,x_2,y_1}} - \tau\AKMSpara{z}\AKMSbracket{\psi\AKMSpara{x_1,x_2,y_1},y_2} \\
& - \tau\AKMSpara{y_1}\AKMSbracket{\psi\AKMSpara{x_1,x_2,y_2},z} - \tau\AKMSpara{\psi\AKMSpara{x_1,x_2,y_2}}\AKMSbracket{z,y_1} - \tau\AKMSpara{z}\AKMSbracket{y_1,\psi\AKMSpara{x_1,x_2,y_2}}\\
&= \tau\AKMSpara{y_1} \Big( \omega\AKMSpara{x_1} \varphi\AKMSpara{x_2,\AKMSbracket{y_2,z}} + \omega\AKMSpara{x_2} \varphi\AKMSpara{\AKMSbracket{y_2,z},a} - \omega\AKMSpara{y_2}\varphi\AKMSpara{z,\AKMSbracket{x_1,x_2}}\Big. \\ 
&\left. - \omega\AKMSpara{z}\varphi\AKMSpara{\AKMSbracket{x_1,x_2},y_2} - \omega\AKMSpara{x_1}\AKMSbracket{y_2,\varphi\AKMSpara{x_2,z}} - \omega\AKMSpara{x_2}\AKMSbracket{y_2,\varphi\AKMSpara{z,x_1}} \right.\\
&\left. - \omega\AKMSpara{z}\AKMSbracket{y_2,\varphi\AKMSpara{x_1,x_2}} - \omega\AKMSpara{x_1}\AKMSbracket{\varphi\AKMSpara{x_2,y_2},z} - \omega\AKMSpara{x_2}\AKMSbracket{\varphi\AKMSpara{y_2,x_1},z} \right. \\
& \Big. - \omega\AKMSpara{y}\AKMSbracket{\varphi\AKMSpara{x_1,x_2},z} \Big)\\
&+ \tau\AKMSpara{y_2}\Big( \omega\AKMSpara{x_1}\varphi\AKMSpara{x_2,\AKMSbracket{z,y_1} }+ \omega\AKMSpara{x_2}\varphi\AKMSpara{\AKMSbracket{z,y_1},x_1} - \omega\AKMSpara{x_1}\AKMSbracket{\varphi\AKMSpara{x_2,z},y_1} \Big.\\
&\left. - \omega\AKMSpara{x_2}\AKMSbracket{\varphi\AKMSpara{z,x_1},y_1} - \omega\AKMSpara{z}\AKMSbracket{\varphi\AKMSpara{x_1,x_2},y_1} - \omega\AKMSpara{x_1}\AKMSbracket{z,\varphi\AKMSpara{x_2,y_1}} \right.\\
& \left. - \omega\AKMSpara{x_2}\AKMSbracket{z,\varphi\AKMSpara{y_1,x_1}} - \omega\AKMSpara{y_1}\AKMSbracket{z,\varphi\AKMSpara{x_1,x_2}} - \omega\AKMSpara{y_1}\varphi\AKMSpara{\AKMSbracket{x_1,x_2},z}\right.\\
& \Big. - \omega\AKMSpara{z}\varphi\AKMSpara{y_1,\AKMSbracket{x_1,x_2}} \Big)\\
&+\tau\AKMSpara{z}\Big( \omega\AKMSpara{x_1}\varphi\AKMSpara{x_2,\AKMSbracket{y_1,y_2}} + \omega\AKMSpara{x_2}\varphi\AKMSpara{\AKMSbracket{y_1,y_2},x_1} - \omega\AKMSpara{y_1}\varphi\AKMSpara{y_2,\AKMSbracket{x_1,x_2}} \Big. \\
&\left. - \omega\AKMSpara{y_2}\varphi\AKMSpara{\AKMSbracket{x_1,x_2},y_1} - \omega\AKMSpara{x_1}\AKMSbracket{\varphi\AKMSpara{x_2,y_1},y_2} - \omega\AKMSpara{x_2}\AKMSbracket{\varphi\AKMSpara{y_1,x_1},y_2}\right.\\
&\left. - \omega\AKMSpara{y_1}\AKMSbracket{\varphi\AKMSpara{x_1,x_2},y_2} - \omega\AKMSpara{x_1}\AKMSbracket{y_1,\varphi\AKMSpara{x_2,y_2}} - \omega\AKMSpara{x_2}\AKMSbracket{y_1,\varphi\AKMSpara{y_2,x_1}} \right.\\
& \Big. -\omega\AKMSpara{y_2}\AKMSbracket{y_1,\varphi\AKMSpara{x_1,x_2}} \Big)\\
& + \tau\AKMSpara{x_1} \Big( \omega\AKMSpara{y_1}\AKMSbracket{x_2,\varphi\AKMSpara{y_2,z}} + \omega\AKMSpara{y_2}\AKMSbracket{x_2,\varphi\AKMSpara{z,y_1}} + \omega\AKMSpara{z}\AKMSbracket{x_2,\varphi\AKMSpara{y_1,y_2}} \Big.\\
& \left. - \omega\AKMSpara{y_1}\varphi\AKMSpara{y_2,\AKMSbracket{x_2,z}} - \omega\AKMSpara{y_2}\varphi\AKMSpara{\AKMSbracket{x_2,z},y_1} - \omega\AKMSpara{y_2}\varphi\AKMSpara{z,\AKMSbracket{x_2,y_1}} \right.\\
& \Big. - \omega\AKMSpara{z}\varphi\AKMSpara{\AKMSbracket{x_2,y_1},y_2} - \omega\AKMSpara{y_1}\varphi\AKMSpara{\AKMSbracket{x_2,y_2},z} - \omega\AKMSpara{z}\varphi\AKMSpara{y_1,\AKMSbracket{x_2,y_2}} \Big)\\
& + \tau\AKMSpara{x_2}\Big( \omega\AKMSpara{y_1}\AKMSbracket{\varphi\AKMSpara{y_2,z},x_1} + \omega\AKMSpara{y_2}\AKMSbracket{\varphi\AKMSpara{z,y_1},x_1} + \omega\AKMSpara{z}\AKMSbracket{\varphi\AKMSpara{y_1,y_2},x_1} \Big.\\
& \left. - \omega\AKMSpara{y_1}\varphi\AKMSpara{y_2,\AKMSbracket{z,x_1}} - \omega\AKMSpara{y_2}\varphi\AKMSpara{\AKMSbracket{z,x_1},y_1} - \omega\AKMSpara{y_2}\varphi\AKMSpara{z,\AKMSbracket{y_1,x_1}} \right.\\
& \Big. - \omega\AKMSpara{z}\varphi\AKMSpara{\AKMSbracket{y_1,x_1},y_2} - \omega\AKMSpara{y_1}\varphi\AKMSpara{\AKMSbracket{y_2,x_1},z} - \omega\AKMSpara{z}\varphi\AKMSpara{y_1,\AKMSbracket{y_2,x_1}} \Big)\\
&+ \Big( \omega\AKMSpara{y_1}\tau\AKMSpara{\varphi\AKMSpara{y_2,z}} + \omega\AKMSpara{y_2}\tau\AKMSpara{\varphi\AKMSpara{z,y_1}} + \omega\AKMSpara{z}\tau\AKMSpara{\varphi\AKMSpara{y_1,y_2}} \Big) \AKMSbracket{x_1,x_2} \\
& - \Big( \omega\AKMSpara{x_1}\tau\AKMSpara{\varphi\AKMSpara{x_2,y_1}} + \omega\AKMSpara{x_2}\tau\AKMSpara{\varphi\AKMSpara{y_1,x_1}} + \omega\AKMSpara{y_1}\tau\AKMSpara{\varphi\AKMSpara{x_1,x_2}} \Big) \AKMSbracket{y_2,z} \\
& - \Big( \omega\AKMSpara{x_1}\tau\AKMSpara{\varphi\AKMSpara{x_2,z}} + \omega\AKMSpara{x_2}\tau\AKMSpara{\varphi\AKMSpara{z,x_1}} + \omega\AKMSpara{z}\tau\AKMSpara{\varphi\AKMSpara{x_1,x_2}} \Big) \AKMSbracket{y_1,y_2} \\
& - \Big( \omega\AKMSpara{x_1}\tau\AKMSpara{\varphi\AKMSpara{x_2,y_2}} + \omega\AKMSpara{x_2}\tau\AKMSpara{\varphi\AKMSpara{y_2,x_1}} + \omega\AKMSpara{y_2}\tau\AKMSpara{\varphi\AKMSpara{x_1,x_2}} \Big) \AKMSbracket{z,y_1} \\
&= - \tau\AKMSpara{y_1}\omega\AKMSpara{x_1}\delta^2\varphi\AKMSpara{z,y_2,x_2} - \tau\AKMSpara{y_1}\omega\AKMSpara{x_2}\delta^2\varphi\AKMSpara{y_2,z,x_1} \\
 &-\tau\AKMSpara{y_2}\omega\AKMSpara{x_1}\delta^2\varphi\AKMSpara{y_1,z,x_2} - \tau\AKMSpara{y_2}\omega\AKMSpara{x_2}\delta^2\varphi\AKMSpara{z,y_1,x_1} \\
& -\tau\AKMSpara{z}\omega\AKMSpara{x_1}\delta^2\varphi\AKMSpara{y_2,y_1,x_2} - \tau\AKMSpara{z}\omega\AKMSpara{x_2}\delta^2\varphi\AKMSpara{y_1,y_2,x_1}\\
&+ \Big( \omega\AKMSpara{y_1}\tau\AKMSpara{\varphi\AKMSpara{y_2,z}} + \omega\AKMSpara{y_2}\tau\AKMSpara{\varphi\AKMSpara{z,y_1}} + \omega\AKMSpara{z}\tau\AKMSpara{\varphi\AKMSpara{y_1,y_2}} \Big) \AKMSbracket{x_1,x_2} \\
& - \Big( \omega\AKMSpara{x_1}\tau\AKMSpara{\varphi\AKMSpara{x_2,y_1}} + \omega\AKMSpara{x_2}\tau\AKMSpara{\varphi\AKMSpara{y_1,x_1}} + \omega\AKMSpara{y_1}\tau\AKMSpara{\varphi\AKMSpara{x_1,x_2}} \Big) \AKMSbracket{y_2,z} \\
& - \Big( \omega\AKMSpara{x_1}\tau\AKMSpara{\varphi\AKMSpara{x_2,z}} + \omega\AKMSpara{x_2}\tau\AKMSpara{\varphi\AKMSpara{z,x_1}} + \omega\AKMSpara{z}\tau\AKMSpara{\varphi\AKMSpara{x_1,x_2}} \Big) \AKMSbracket{y_1,y_2} \\
& - \Big( \omega\AKMSpara{x_1}\tau\AKMSpara{\varphi\AKMSpara{x_2,y_2}} + \omega\AKMSpara{x_2}\tau\AKMSpara{\varphi\AKMSpara{y_2,x_1}} + \omega\AKMSpara{y_2}\tau\AKMSpara{\varphi\AKMSpara{x_1,x_2}} \Big) \AKMSbracket{z,y_1}. \\
\end{align*}
Since \[\underset{x,y,z}{\LARGE{\circlearrowleft }}\omega \AKMSpara{x} \tau \AKMSpara{ \varphi \AKMSpara{ y,z} } = 0, \forall x,y,z \in A, \]
we get \[d^2 \psi = 0.\]
\qed
\end{proof}

\begin{theorem}
Every $1$-cocycle for the scalar cohomology of $(A,\AKMSbracket{.,.})$ is a $1$-cocycle for the scalar cohomology of the induced $3$-Lie algebra.
\end{theorem}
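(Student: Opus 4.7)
The plan is to reduce the statement to a short direct verification by first identifying what the scalar $1$-cocycle condition means concretely on each side, and then expanding the induced ternary bracket. The whole proof will ultimately be one line, so most of the work is interpreting the general coboundary formulas at degree $1$.

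First I would specialize the Lie algebra coboundary $\delta^p$ to $p=1$ with $M=\mathbb{K}$ and $\rho=0$. Since each term of the first sum in the definition of $\delta^1\varphi$ carries the factor $\rho(x_j)=0$, only the second sum survives, and it reduces to a single term $\delta^1\varphi(x,y)=-\varphi\AKMSpara{\AKMSbracket{x,y}}$. Hence $\varphi\in Z_0^1(A,\mathbb{K})$ is equivalent to $\varphi\AKMSpara{\AKMSbracket{x,y}}=0$ for all $x,y\in A$.

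Next I would specialize the $3$-Lie algebra trivial-action coboundary $d^p$ to $p=1$. For $p=1$ the double sum has only the term $j=1$, $k=3$ with $a_1=(x_1,x_2)$, so $d^1\psi(x_1,x_2,x_3)=-\psi\AKMSpara{\AKMSbracket{x_1,x_2,x_3}_\tau}$. Thus a linear form $\psi\colon A\to\mathbb{K}$ lies in $Z_0^1(A_\tau,\mathbb{K})$ precisely when $\psi\AKMSpara{\AKMSbracket{x,y,z}_\tau}=0$ for all $x,y,z\in A$.

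With these two characterizations in hand the verification is immediate: starting from $\varphi\in Z_0^1(A,\mathbb{K})$ and expanding the induced bracket by its definition, linearity of $\varphi$ yields $\varphi\AKMSpara{\AKMSbracket{x,y,z}_\tau}=\tau(x)\varphi\AKMSpara{\AKMSbracket{y,z}}+\tau(y)\varphi\AKMSpara{\AKMSbracket{z,x}}+\tau(z)\varphi\AKMSpara{\AKMSbracket{x,y}}$, and each of the three terms vanishes by the Lie cocycle assumption on $\varphi$. There is no substantive obstacle in this argument; the only point requiring real attention is the correct reading of the general $d^p$ formula at $p=1$, after which the conclusion follows at once from linearity and the explicit form of $\AKMSbracket{\cdot,\cdot,\cdot}_\tau$.
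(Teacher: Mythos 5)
Your proof is correct and follows essentially the same route as the paper: both reduce the degree-one cocycle conditions to $\varphi\AKMSpara{\AKMSbracket{x,y}}=0$ and $\varphi\AKMSpara{\AKMSbracket{x,y,z}_\tau}=0$, and then use the fact that the induced bracket lands in $\AKMSbracket{A,A}$. The only cosmetic difference is that you inline the expansion $\varphi\AKMSpara{\AKMSbracket{x,y,z}_\tau}=\tau(x)\varphi\AKMSpara{\AKMSbracket{y,z}}+\tau(y)\varphi\AKMSpara{\AKMSbracket{z,x}}+\tau(z)\varphi\AKMSpara{\AKMSbracket{x,y}}$ where the paper instead cites the inclusion $\AKMSbracket{A,A,A}_\tau\subset\AKMSbracket{A,A}$ from its Remark~\ref{AKMS-D3subD}.
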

\begin{proof}
Let $\omega$ be a $1$-cocycle for the scalar cohomology of $(A,\AKMSbracket{.,.})$, then 
\[\forall x,y \in A, \delta^1 \omega(x,y) = \omega \AKMSpara{\AKMSbracket{x,y}} = 0,\]
 which is equivalent to $\AKMSbracket{A,A} \subset \ker \omega$. By Remark \ref{AKMS-D3subD} $\AKMSbracket{A,A,A}_\tau \subset \AKMSbracket{A,A}$ and then $\AKMSbracket{A,A,A}_\tau \subset \ker \omega$, that is 
\[ \forall x,y,z \in A, \omega\AKMSpara{\AKMSbracket{x,y,z}_\tau}=d^1 \omega \AKMSpara{x,y,z} = 0,\]
 which means that $\omega$ is a $1$-cocycle for the scalar cohomology of $\AKMSpara{A,\AKMSbracket{.,.,.}_\tau}$.  \qed
\end{proof}

\begin{theorem}\label{AKMS-Z2tri}
Let $\varphi \in Z^2_{0}(A,\mathbb{K})$ and $\omega : A \to \mathbb{K}$ a linear map satisfying :
\begin{enumerate}
\item $\tau(x)\omega(y) = \tau(y)\omega(x)$,
\item $\omega([x,y])=0$,
\item $\omega(y_2)\AKMSpara{\tau(x_1)\varphi\AKMSpara{\AKMSbracket{x_1,z}x_2}+\tau(x_2)\varphi\AKMSpara{\AKMSbracket{z,y_1}x_1}}=0$.
\end{enumerate}
Then $\psi\AKMSpara{x,y,z} = \underset{x,y,z}{\LARGE{\circlearrowleft }}\omega \AKMSpara{x} \varphi \AKMSpara{y,z}$ is a $2$-cocycle of the induced $3$-Lie algebra.
\end{theorem}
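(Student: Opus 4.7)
The strategy closely parallels that of Theorem~\ref{AKMS-Z2ad}, with the considerable simplification that for the trivial action the coboundary formula carries no adjoint-action correction, reducing to
\begin{align*}
d^2\psi\AKMSpara{x_1,x_2,y_1,y_2,z} &= \psi\AKMSpara{x_1,x_2,\AKMSbracket{y_1,y_2,z}_\tau} - \psi\AKMSpara{\AKMSbracket{x_1,x_2,y_1}_\tau,y_2,z}\\
&\quad - \psi\AKMSpara{y_1,\AKMSbracket{x_1,x_2,y_2}_\tau,z} - \psi\AKMSpara{y_1,y_2,\AKMSbracket{x_1,x_2,z}_\tau}.
\end{align*}
The plan is to expand each outer $\psi$ via $\psi(a,b,c)=\omega(a)\varphi(b,c)+\omega(b)\varphi(c,a)+\omega(c)\varphi(a,b)$ and each inner ternary bracket via the defining relation $\AKMSbracket{u,v,w}_\tau=\tau(u)\AKMSbracket{v,w}+\tau(v)\AKMSbracket{w,u}+\tau(w)\AKMSbracket{u,v}$, and then to collect the resulting monomials by the unique $\tau$-factor ($\tau(x_1)$, $\tau(x_2)$, $\tau(y_1)$, $\tau(y_2)$ or $\tau(z)$) each of them carries.

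Three kinds of simplification will then handle the expansion. First, whenever $\omega$ is evaluated on an inner three-bracket, $\omega\AKMSpara{\AKMSbracket{u,v,w}_\tau}$ splits into $\tau(u)\omega\AKMSpara{\AKMSbracket{v,w}}+\tau(v)\omega\AKMSpara{\AKMSbracket{w,u}}+\tau(w)\omega\AKMSpara{\AKMSbracket{u,v}}$, which vanishes by hypothesis~(ii); this erases the entire $\omega(\text{inner bracket})$ slot from each of the four outer $\psi$ pieces. Second, within each fixed $\tau$-block, the remaining $\varphi$-monomials reassemble into weighted sums of scalar Chevalley--Eilenberg $2$-coboundaries $\omega(\cdot)\,\delta^2\varphi(\cdot,\cdot,\cdot)$, mirroring the identification performed in the adjoint case in the proof of Theorem~\ref{AKMS-Z2ad}, only with the scalar coboundary $\delta^2\varphi(a,b,c)=-\varphi(\AKMSbracket{a,b},c)+\varphi(\AKMSbracket{a,c},b)-\varphi(\AKMSbracket{b,c},a)$ in place of the adjoint one; these vanish because $\varphi\in Z_0^2(A,\mathbb{K})$.

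What remains is a small residue of mismatched monomials, in which the $\tau$- and $\omega$-factors are attached to the ``wrong'' arguments to complete a $\delta^2\varphi$-block. I would dispose of these in two steps: first use hypothesis~(i), $\tau(u)\omega(v)=\tau(v)\omega(u)$, to reshuffle the scalar factors and align the residue into the form appearing on the left-hand side of hypothesis~(iii); then invoke~(iii) to conclude that the remainder is zero, so that $d^2\psi=0$. The main obstacle is not conceptual but organisational: the raw expansion produces on the order of forty scalar monomials, each with a sign and a cyclic position, so the argument is essentially a careful bookkeeping exercise. Organising the sum by the unique $\tau$-factor present, exactly as in the proof of Theorem~\ref{AKMS-Z2ad}, keeps the computation tractable and exposes the $\delta^2\varphi$-structure that drives the cancellations.
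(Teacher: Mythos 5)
Your proposal follows the paper's proof essentially verbatim: the same four-term trivial-action coboundary formula, the same expansion of the inner $\tau$-brackets and the outer $\psi$'s, condition~(ii) killing every $\omega(\text{inner bracket})$ slot, the regrouping of the surviving monomials into scalar $\delta^2\varphi$ blocks weighted by $\tau(x_i)\omega(\cdot)$, and condition~(iii) disposing of the leftover residue $-2\,\omega(y_2)\bigl(\tau(x_1)\varphi([y_1,z],x_2)+\tau(x_2)\varphi([z,y_1],x_1)\bigr)$. The only cosmetic difference is that you make the use of condition~(i) for reshuffling the $\tau$- and $\omega$-factors explicit, whereas the paper absorbs it silently into the bookkeeping.
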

\begin{proof}
Let $\varphi \in Z^2_{0}(A,\mathbb{K})$ and $\omega : A \to \mathbb{K}$ a linear map satisfying conditions 1, 2 and 3 above, and let $\psi\AKMSpara{x,y,z} = \underset{x,y,z}{\LARGE{\circlearrowleft }}\omega \AKMSpara{x} \varphi \AKMSpara{y,z}$, then we have:
\begin{align*}
d^2 \psi &\AKMSpara{x_1,x_2,y_1,y_2,z} = \psi \AKMSpara{x_1,x_2,\AKMSbracket{y_1,y_2,z}_\tau} - \psi \AKMSpara{\AKMSbracket{x_1,x_2,y_1}_\tau,y_2,z} \\
&- \psi\AKMSpara{y_1,\AKMSbracket{x_1,x_2,y_2}_\tau,z} - \psi\AKMSpara{y_1,y_2,\AKMSbracket{x_1,x_2,z}_\tau}\\
&= \tau\AKMSpara{y_1}\psi\AKMSpara{x_1,x_2,\AKMSbracket{y_2,z}}+\tau\AKMSpara{y_2}\psi\AKMSpara{x_1,x_2,\AKMSbracket{z,y_1}}+\tau\AKMSpara{z}\psi\AKMSpara{x_1,x_2,\AKMSbracket{y_1,y_2}}\\
& - \tau\AKMSpara{x_1}\psi\AKMSpara{\AKMSbracket{x_2,y_1},y_2,z} - \tau\AKMSpara{x_2}\psi\AKMSpara{\AKMSbracket{y_1,x_1},y_2,z} - \tau\AKMSpara{y_1}\psi\AKMSpara{\AKMSbracket{x_1,x_2},y_2,z}\\
& - \tau\AKMSpara{x_1}\psi\AKMSpara{y_1,\AKMSbracket{x_2,y_2},z} - \tau\AKMSpara{x_2}\psi\AKMSpara{y_1,\AKMSbracket{y_2,x_1},z} - \tau\AKMSpara{y_2}\psi\AKMSpara{y_1,\AKMSbracket{x_1,x_2},z}\\
& - \tau\AKMSpara{x_1}\psi\AKMSpara{y_1,y_2,\AKMSbracket{x_2,z}} - \tau\AKMSpara{x_2}\psi\AKMSpara{y_1,y_2,\AKMSbracket{z,x_1}} - \tau\AKMSpara{z}\psi\AKMSpara{y_1,y_2,\AKMSbracket{x_1,x_2}}\\
&= \tau\AKMSpara{y_1}\AKMSpara{\omega\AKMSpara{x_1}\varphi\AKMSpara{x_2,\AKMSbracket{y_2,z}} + \omega\AKMSpara{x_2}\varphi\AKMSpara{\AKMSbracket{y_2,z},x_1}} \\
& + \tau\AKMSpara{y_2}\AKMSpara{\omega\AKMSpara{x_1}\varphi\AKMSpara{x_2,\AKMSbracket{y_1,z}} + \omega\AKMSpara{x_2}\varphi\AKMSpara{\AKMSbracket{y_1,z},x_1}} \\
& + \tau\AKMSpara{z}\AKMSpara{\omega\AKMSpara{x_1}\varphi\AKMSpara{x_2,\AKMSbracket{y_1,y_2}} + \omega\AKMSpara{x_2}\varphi\AKMSpara{\AKMSbracket{y_1,y_2},x_1}}\\
& - \tau\AKMSpara{x_1}\AKMSpara{\omega\AKMSpara{y_2}\varphi\AKMSpara{z,\AKMSbracket{x_2,y_1}} + \omega\AKMSpara{z}\varphi\AKMSpara{\AKMSbracket{x_2,y_1},y_2}}\\
& - \tau\AKMSpara{x_2}\AKMSpara{\omega\AKMSpara{y_2}\varphi\AKMSpara{z,\AKMSbracket{y_1,x_1}} + \omega\AKMSpara{z}\varphi\AKMSpara{\AKMSbracket{y_1,x_1}y_2}}\\
& - \tau\AKMSpara{y_1}\AKMSpara{\omega\AKMSpara{y_2}\varphi\AKMSpara{z,\AKMSbracket{x_1,x_2}} + \omega{z}\varphi\AKMSpara{\AKMSbracket{x_1,x_2},y_2}}\\
& - \tau\AKMSpara{x_1}\AKMSpara{\omega\AKMSpara{y_1}\varphi\AKMSpara{\AKMSbracket{x_2,y_2},z} + \omega\AKMSpara{z}\varphi\AKMSpara{y_1,\AKMSbracket{x_2,y_2}}} \\
& - \tau\AKMSpara{x_2}\AKMSpara{\omega\AKMSpara{y_1}\varphi\AKMSpara{\AKMSbracket{y_2,x_1},z} + \omega\AKMSpara{z}\varphi\AKMSpara{y_1,\AKMSbracket{y_2,x_1}}} \\
& - \tau\AKMSpara{y_2}\AKMSpara{\omega\AKMSpara{y_1} \varphi\AKMSpara{\AKMSbracket{x_1,x_2},z} + \omega\AKMSpara{z}\varphi\AKMSpara{y_1,\AKMSbracket{x_1,x_2}}} \\
& - \tau\AKMSpara{x_1}\AKMSpara{\omega\AKMSpara{y_1}\varphi\AKMSpara{y_2,\AKMSbracket{x_2,z}} + \omega\AKMSpara{y_2}\varphi\AKMSpara{\AKMSbracket{x_2,z},y_1}} \\
& - \tau\AKMSpara{x_2}\AKMSpara{\omega\AKMSpara{y_1}\varphi\AKMSpara{y_2,\AKMSbracket{z,x_1}} + \omega\AKMSpara{y_2}\varphi\AKMSpara{\AKMSbracket{z,x_1},y_1}} \\
& - \tau\AKMSpara{z}\AKMSpara{\omega\AKMSpara{y_1}\varphi\AKMSpara{y_2,\AKMSbracket{x_1,x_2}} + \omega\AKMSpara{y_2}\varphi\AKMSpara{\AKMSbracket{x_1,x_2},y_1}}\\
& = \tau\AKMSpara{x_1}\omega\AKMSpara{y_1}\delta^2\varphi\AKMSpara{y_2,z,x_2} + \tau\AKMSpara{x_1}\omega\AKMSpara{y_2}\delta^2\varphi\AKMSpara{z,y_1,x_2}\\
& + \tau\AKMSpara{x_2}\omega\AKMSpara{y_1}\delta^2\varphi\AKMSpara{z,y_2,x_1} + \tau\AKMSpara{x_2}\omega\AKMSpara{y_2}\delta^2\varphi\AKMSpara{x_1,y_1,z}\\
& + \tau\AKMSpara{x_1}\omega\AKMSpara{z}\delta^2\varphi\AKMSpara{y_1,y_2,x_2} + \tau\AKMSpara{x_2}\omega\AKMSpara{z}\delta^2\varphi\AKMSpara{y_2,y_1,x_1}\\
& -2 \omega\AKMSpara{y_2}\AKMSpara{\tau\AKMSpara{x_1}\varphi\AKMSpara{\AKMSbracket{y_1,z},x_2} + \tau\AKMSpara{x_2}\varphi\AKMSpara{\AKMSbracket{z,y_1},x_1}}. 
\end{align*}
Since \[  \omega\AKMSpara{y_2}\AKMSpara{\tau\AKMSpara{x_1}\varphi\AKMSpara{\AKMSbracket{y_1,z},x_2} + \tau\AKMSpara{x_2}\varphi\AKMSpara{\AKMSbracket{z,y_1},x_1}} = 0, \] it follows that $d^2\psi = 0$.
\qed
\end{proof}

\begin{remark}
Condition 1 in Theorems \ref{AKMS-Z2ad} and \ref{AKMS-Z2tri} are equivalent to $\omega = \lambda \tau, \lambda \in \mathbb{K}$, and therefore one may remove condition 2, which is redundant.
\end{remark}
\begin{lemma}\label{AKMScoBtau}
Let $\alpha \in C^1(A,\mathbb{K})$. Then:
\[ d^1 \alpha\AKMSpara{x,y,z} = \underset{x,y,z}{\circlearrowleft }\tau \AKMSpara{x} \delta^1 \alpha\AKMSpara{y,z}, \forall x,y,z \in A. \]
\end{lemma}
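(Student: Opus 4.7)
The plan is a short direct computation that chains three equalities together, all at the level of definitions. First, I would evaluate the left hand side by specializing the coboundary operator $d^p$ for the trivial action on the induced $3$-Lie algebra $\AKMSpara{A,\AKMSbracket{.,.,.}_\tau}$ to the case $p=1$. In the defining double sum the indices range over $j=1$ and $k$ from $3$ to $3$, so the formula collapses to a single term, giving
\[ d^1\alpha(x,y,z) = -\alpha\AKMSpara{\AKMSbracket{x,y,z}_\tau}. \]

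Next, I would substitute the induced ternary bracket according to its defining formula $\AKMSbracket{x,y,z}_\tau = \tau(x)\AKMSbracket{y,z} + \tau(y)\AKMSbracket{z,x} + \tau(z)\AKMSbracket{x,y}$ and use the linearity of $\alpha$ to split the right hand side of the displayed equation into three scalar multiples of values of $\alpha$ on Lie brackets, namely $-\tau(x)\alpha\AKMSpara{\AKMSbracket{y,z}} - \tau(y)\alpha\AKMSpara{\AKMSbracket{z,x}} - \tau(z)\alpha\AKMSpara{\AKMSbracket{x,y}}$.

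Finally, I would match each of these three summands with a term of the claimed right hand side by invoking the explicit form of the Chevalley--Eilenberg coboundary for the trivial representation, $\delta^1\alpha(u,v) = -\alpha\AKMSpara{\AKMSbracket{u,v}}$. Replacing each $-\alpha\AKMSpara{\AKMSbracket{\cdot,\cdot}}$ factor by the corresponding $\delta^1\alpha$ and regrouping produces exactly the cyclic sum $\underset{x,y,z}{\circlearrowleft}\tau(x)\delta^1\alpha(y,z)$, which is the asserted identity.

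There is no real obstacle here: the argument is purely unfolding definitions, and the only point that demands attention is aligning the sign conventions of the two coboundary operators $\delta^1$ and $d^1$, so that the three minus signs appearing after the expansion of $\AKMSbracket{x,y,z}_\tau$ are absorbed consistently into the $\delta^1\alpha$ factors on the right.
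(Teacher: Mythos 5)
Your proposal is correct and follows essentially the same route as the paper: unfold $d^1\alpha(x,y,z)$ to $\pm\alpha\AKMSpara{\AKMSbracket{x,y,z}_\tau}$, expand the induced bracket, and recognize each resulting term as $\pm\alpha\AKMSpara{\AKMSbracket{\cdot,\cdot}} = \tau(\cdot)\,\delta^1\alpha(\cdot,\cdot)$ in the cyclic sum. If anything, you are more scrupulous than the paper's own one-line proof about the overall minus sign coming from the stated definitions of $d^1$ and $\delta^1$, which in any case cancels because it appears on both sides.
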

\begin{proof}
Let $\alpha \in C^1(A,\mathbb{K})$, $x,y,z \in A$, then we have:
\[ d^1 \alpha\AKMSpara{x,y,z} = \alpha\AKMSpara{\AKMSbracket{x,y,z}}=\underset{x,y,z}{\circlearrowleft }\tau \AKMSpara{x} \alpha\AKMSpara{\AKMSbracket{y,z}}= \underset{x,y,z}{\circlearrowleft }\tau \AKMSpara{x} \delta^1 \alpha\AKMSpara{y,z}\]\qed
\end{proof}

\begin{proposition}\label{AKMSB2triv_eq}
Let $\varphi_1,\varphi_2 \in Z^2_{0}(A,\mathbb{K})$ satisfying conditions of Theorem \ref{AKMS-Z2tri}. If $\varphi_1,\varphi_2$ are in the same cohomology class then $\psi_1,\psi_2$ defined by:
\[ \psi_i\AKMSpara{x,y,z} = \underset{x,y,z}{\circlearrowleft }\tau \AKMSpara{x} \varphi_i \AKMSpara{y,z}, i=1,2 \]
are in the same cohomology class.
\end{proposition}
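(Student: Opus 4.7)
The plan is to reduce everything to Lemma~\ref{AKMScoBtau} once we observe, following the remark after Theorem~\ref{AKMS-Z2tri}, that the linear map $\omega$ appearing in the construction of the $\psi_i$ must be proportional to $\tau$, so the statement indeed concerns $\psi_i(x,y,z) = \underset{x,y,z}{\circlearrowleft}\tau(x)\varphi_i(y,z)$. By Theorem~\ref{AKMS-Z2tri} each $\psi_i$ is a $2$-cocycle of the induced $3$-Lie algebra for the trivial action, so only the coboundary comparison is at stake.

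First I would unpack the hypothesis that $\varphi_1$ and $\varphi_2$ lie in the same cohomology class of $(A,[\cdot,\cdot])$ for the scalar Chevalley--Eilenberg complex: this gives a $1$-cochain $\alpha \in C^1(A,\mathbb{K})$ with $\varphi_1 - \varphi_2 = \delta^1\alpha$. Substituting into the definition of the $\psi_i$ and using linearity of the cyclic sum over $(x,y,z)$, I obtain
\begin{equation*}
\psi_1(x,y,z) - \psi_2(x,y,z) = \underset{x,y,z}{\circlearrowleft}\tau(x)\,(\varphi_1 - \varphi_2)(y,z) = \underset{x,y,z}{\circlearrowleft}\tau(x)\,\delta^1\alpha(y,z).
\end{equation*}

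Next I would invoke Lemma~\ref{AKMScoBtau}, which identifies precisely this cyclic expression with $d^1\alpha(x,y,z)$, the $3$-Lie coboundary of $\alpha$ viewed as a $1$-cochain of $(A,[\cdot,\cdot,\cdot]_\tau)$. Therefore $\psi_1 - \psi_2 = d^1\alpha \in B^2_{0}(A_\tau,\mathbb{K})$, and since both $\psi_1$ and $\psi_2$ are $2$-cocycles this means they represent the same cohomology class.

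I do not expect any real obstacle here: the work has already been done in Lemma~\ref{AKMScoBtau}, which converts a Chevalley--Eilenberg coboundary on $A$ twisted by $\tau$ into a $3$-Lie coboundary on $A_\tau$. The only slightly subtle point to state explicitly is that one must check that $\alpha$, as a $1$-cochain with values in $\mathbb{K}$, is an admissible cochain for the $3$-Lie scalar complex, which is immediate since $C^1(A,\mathbb{K}) = C^1(A_\tau,\mathbb{K}) = \operatorname{Hom}(A,\mathbb{K})$; and that the conditions of Theorem~\ref{AKMS-Z2tri} on $\varphi_i$ are used only to guarantee that the $\psi_i$ are cocycles, which has already been established.
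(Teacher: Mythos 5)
Your proposal is correct and follows essentially the same route as the paper's proof: write $\varphi_2-\varphi_1$ (or $\varphi_1-\varphi_2$) as $\delta^1\alpha$, use linearity of the cyclic sum, and apply Lemma~\ref{AKMScoBtau} to identify the resulting expression with $d^1\alpha$, so that $\psi_1$ and $\psi_2$ differ by a coboundary of the induced $3$-Lie algebra. The additional remarks you make (that $\omega=\lambda\tau$, that the $\psi_i$ are cocycles by Theorem~\ref{AKMS-Z2tri}, and that $C^1(A,\mathbb{K})=C^1(A_\tau,\mathbb{K})$) are consistent with, though left implicit in, the paper's argument.
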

\begin{proof}
Let $\varphi_1,\varphi_2 \in Z^2_{0}(A,\mathbb{K})$ be two cocycles in the same cohomology class, that is \[\varphi_2 - \varphi_1 = \delta^1\alpha, \alpha \in C^1(A,\mathbb{K})\] satisfying conditions of Theorem \ref{AKMS-Z2tri}, and 
\[ \psi_i\AKMSpara{x,y,z} = \underset{x,y,z}{\circlearrowleft }\tau \AKMSpara{x} \varphi_i \AKMSpara{y,z} : i=1,2, \]
then we have:
\begin{align*}
\psi_2\AKMSpara{x,y,z}-\psi_1\AKMSpara{x,y,z} &= \underset{x,y,z}{\circlearrowleft }\tau \AKMSpara{x} \varphi_2 \AKMSpara{y,z} - \underset{x,y,z}{\circlearrowleft }\tau \AKMSpara{x} \varphi_1 \AKMSpara{y,z}\\
&= \underset{x,y,z}{\circlearrowleft }\tau \AKMSpara{x} \AKMSpara{\varphi_2-\varphi_1} \AKMSpara{y,z}\\
&= \underset{x,y,z}{\circlearrowleft }\tau \AKMSpara{x} \delta^1 \alpha \AKMSpara{y,z}\\
&= d^1 \alpha\AKMSpara{x,y,z}.
\end{align*}
Which means that $\psi_1$ and $\psi_2$ are in the same cohomology class. \qed
\end{proof}



\section{Central Extension of $3$-Lie Algebras Induced by Lie Algebras}\label{AKMS:Extension}
\begin{definition}
Let $A,B,C$ be $n$-Lie algebras ($n\geq 2$). An extension of $B$ by $A$ is a short sequence: 
\[ A \overset{\lambda}{\to} C \overset{\mu}{\to} B, \]
such that $\lambda$ is an injective homomorphism, $\mu$ is a surjective homomorphism, and $\operatorname{Im} \lambda \subset \ker \mu$. We say also that $C$ is an extension of $B$ by $A$.
\end{definition}
\begin{definition}
Let $A$, $B$ be $n$-Lie algebras, and  $A \overset{\lambda}{\to} C \overset{\mu}{\to} B$ be an extension of $B$ by $A$.
\begin{itemize}
\item The extension is said to be trivial if there exists an ideal $I$ of $C$ such that $C = \ker \mu \oplus I$.
\item It is said to be central if $\ker \mu \subset Z (C)$.
\end{itemize}
\end{definition}
We may equivalently define central extensions by a $1$-dimensional algebra (we will simply call it central extension) this way:
\begin{definition}
Let $A$ be a $n$-Lie algebra, we call central extension of $A$ the space $\bar{A}=A\oplus \mathbb{K} c$ equipped with the bracket:
\[ \forall x_1,...,x_n \in A, \AKMSbracket{x_1,...,x_n}_c = \AKMSbracket{x_1,...,x_n} + \omega\AKMSpara{x_1,...,x_n} c \text{ and } \AKMSbracket{x_1,...,x_{n-1},c}_c = 0. \] 
Where $\omega$ is a skew-symmetric $n$-linear form such that $\AKMSbracket{\cdot,...,\cdot}_c$ satisfies the fundamental identity (or Jacobi identity for $n=2$).
\end{definition}

\begin{proposition}[\cite{aip:review}]
\begin{enumerate}
\item The bracket of a central extension satisfies the fundamental identity (resp. Jacobi identity) if and only if $\omega$ is a $2$-cocycle for the scalar cohomology of $n$-Lie algebras (resp. Lie algebras).
\item Two central extensions of a $n$-Lie algebra (resp. Lie algebra) $A$ given by two maps $\omega_1$ and $\omega_2$ are isomorphic if and only if $\omega_2 - \omega_1$ is a $2$-coboundary for the scalar cohomology of $n$-Lie algebras (resp. Lie algebras).
\end{enumerate}
\end{proposition}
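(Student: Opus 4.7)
The plan is to prove both parts by direct expansion on the extended space $\bar A = A\oplus \mathbb{K}c$, exploiting that every bracket having $c$ in an argument slot vanishes.

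For part (1), I would write $[x_1,\ldots,x_n]_c = [x_1,\ldots,x_n] + \omega(x_1,\ldots,x_n)\,c$ and substitute this into both sides of the fundamental identity (resp.\ Jacobi identity when $n=2$) for the extended bracket. The left-hand side expands, after using $[x_1,\ldots,x_{n-1},c]_c = 0$, to $[x_1,\ldots,x_{n-1},[y_1,\ldots,y_n]] + \omega(x_1,\ldots,x_{n-1},[y_1,\ldots,y_n])\,c$, and the right-hand side similarly to $\sum_i [y_1,\ldots,[x_1,\ldots,x_{n-1},y_i],\ldots,y_n] + \sum_i \omega(y_1,\ldots,[x_1,\ldots,x_{n-1},y_i],\ldots,y_n)\,c$; all other contributions drop out because $c$ is killed by every bracket. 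Projecting onto $A$ reproduces the original fundamental identity, which holds by hypothesis on $A$, while projecting onto $\mathbb{K}c$ gives exactly the scalar $2$-cocycle condition $d^2\omega=0$ as written in the coboundary formula of Section~\ref{AKMS:Cohomology}. Hence the extended bracket satisfies the fundamental identity if and only if $\omega$ is a scalar $2$-cocycle.

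For part (2), any isomorphism of central extensions $\Phi:\bar A_{\omega_1}\to \bar A_{\omega_2}$ must fix $c$ and induce the identity on the quotient $\bar A/\mathbb{K}c\cong A$, so it has the form $\Phi(x)=x+\alpha(x)c$ for $x\in A$ and $\Phi(c)=c$ with $\alpha\in C^1(A,\mathbb{K})$. Substituting into the homomorphism condition $\Phi([x_1,\ldots,x_n]_{\omega_1}) = [\Phi(x_1),\ldots,\Phi(x_n)]_{\omega_2}$, all $\alpha(x_k)c$ contributions on the right vanish because $c$ is central, and the identity collapses to the single scalar equation $\omega_2(x_1,\ldots,x_n) - \omega_1(x_1,\ldots,x_n) = \pm\,\alpha([x_1,\ldots,x_n])$, i.e.\ $\omega_2-\omega_1 = \pm\,\delta^1\alpha$ (the sign depending on the convention for $\delta^1$ on $1$-cochains). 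Conversely, for any $\alpha$ witnessing that $\omega_2-\omega_1$ is a coboundary, the formula $\Phi(x)=x+\alpha(x)c$, $\Phi(c)=c$ provides an explicit equivalence between $\bar A_{\omega_1}$ and $\bar A_{\omega_2}$, which establishes both implications.

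The main obstacle is bookkeeping rather than conceptual: one must carefully identify the $c$-component of the expanded fundamental identity with the explicit $d^2\omega$ formula recalled in Section~\ref{AKMS:Cohomology}, and pin down the sign conventions for $\delta^1$ and $d^1$ on $1$-cochains so that the coboundary statement in part (2) matches the formulas used in the rest of the paper (compare Lemma~\ref{AKMScoBtau}). Once indexing and signs are fixed, both directions of both equivalences follow at once from the same computation, with the Lie algebra case recovered as the specialization $n=2$ using the Chevalley--Eilenberg scalar $2$-cocycle formula.
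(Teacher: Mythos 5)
The paper does not actually prove this proposition; it is quoted from the literature (de Azc\'arraga--Izquierdo) with a citation and no argument, so there is nothing internal to compare against. Your proposal is the standard direct proof and is essentially sound: splitting the fundamental identity for $[\cdot,\dots,\cdot]_c$ into its $A$-component and its $\mathbb{K}c$-component does reduce part (1) to the scalar $2$-cocycle condition (the cases where $c$ occupies an argument slot are, as you say, vacuous since every bracket kills $c$), and the normal form $\Phi(x)=x+\alpha(x)c$, $\Phi(c)=c$ reduces part (2) to $\omega_2-\omega_1=\pm\delta^1\alpha$.

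One point deserves more care in part (2): you assert that any isomorphism of central extensions \emph{must} fix $c$ and induce the identity on $\bar A/\mathbb{K}c\cong A$. That is not a consequence of being an algebra isomorphism; it is the definition of an equivalence (morphism) of extensions. If ``isomorphic'' were read as ``isomorphic as $n$-Lie algebras,'' the statement would be false --- e.g.\ for abelian $A$ all scalar $2$-coboundaries vanish, yet the central extensions determined by $\omega$ and by $2\omega$ are isomorphic algebras. So you should state explicitly that the proposition concerns equivalence of extensions (isomorphisms commuting with $\lambda$ and $\mu$ in the short sequence), which is the convention in the cited source and is consistent with the paper's definition of an extension; with that reading your argument goes through. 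The sign ambiguity you flag in $\delta^1\alpha$ is harmless, since $\alpha$ may be replaced by $-\alpha$.
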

Now, we look at the question of whether a central extension of a Lie algebra may give a central extension of the induced $3$-Lie algebra (by some trace $\tau$), the answer is given by the following theorem:
\begin{theorem}
Let $(A,\AKMSbracket{.,.})$ be a Lie algebra, $\tau$ be a trace and $\AKMSpara{A,\AKMSbracket{.,.,.}_\tau}$ be the induced $3$-Lie algebra. If $\AKMSpara{\bar{A},\AKMSbracket{.,.}_c}$ is a central extension of $(A,\AKMSbracket{.,.})$ where \[\bar{A}=A\oplus \mathbb{K} c \text{ and } \AKMSbracket{x,y}_c = \AKMSbracket{x,y} + \omega\AKMSpara{x,y}c, \] and we extend $\tau$ to $\bar{A}$ by assuming $\tau(c)=0$ then $\AKMSpara{\bar{A},\AKMSbracket{.,.,.}_{c,\tau}}$ the $3$-Lie algebra induced by $\AKMSpara{\bar{A},\AKMSbracket{.,.}_c}$, is a central extension of $(A,\AKMSbracket{.,.,.}_\tau)$.
\end{theorem}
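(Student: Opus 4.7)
The plan is to reduce the statement to two short, explicit computations of the ternary bracket $\AKMSbracket{.,.,.}_{c,\tau}$ on $\bar{A}$. First, I would verify that the extension of $\tau$ by $\tau(c)=0$ is still a $\AKMSbracket{.,.}_c$-trace: for $x,y\in A$ one has $\tau\AKMSpara{\AKMSbracket{x,y}_c}=\tau\AKMSpara{\AKMSbracket{x,y}}+\omega(x,y)\tau(c)=0$, and $\AKMSbracket{x,c}_c=0$ trivially, so the theorem on induced $3$-Lie algebras produces a bona fide $3$-Lie algebra $\AKMSpara{\bar{A},\AKMSbracket{.,.,.}_{c,\tau}}$.

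The heart of the argument is then a direct unfolding of the ternary bracket. For $x,y,z\in A$,
\[\AKMSbracket{x,y,z}_{c,\tau}=\tau(x)\AKMSbracket{y,z}_c+\tau(y)\AKMSbracket{z,x}_c+\tau(z)\AKMSbracket{x,y}_c=\AKMSbracket{x,y,z}_\tau+\Omega(x,y,z)\,c,\]
where $\Omega(x,y,z)=\tau(x)\omega(y,z)+\tau(y)\omega(z,x)+\tau(z)\omega(x,y)$. For a slot containing $c$, using $\tau(c)=0$ together with $\AKMSbracket{\cdot,c}_c=0$, every summand in
\[\AKMSbracket{x,y,c}_{c,\tau}=\tau(x)\AKMSbracket{y,c}_c+\tau(y)\AKMSbracket{c,x}_c+\tau(c)\AKMSbracket{x,y}_c\]
vanishes. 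These two identities are precisely the defining relations of a central extension of $\AKMSpara{A,\AKMSbracket{.,.,.}_\tau}$ by the $3$-linear form $\Omega$, in the sense of the definition recalled earlier.

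To conclude, I would invoke the proposition characterizing central extensions: since $\AKMSpara{\bar{A},\AKMSbracket{.,.,.}_{c,\tau}}$ has just been exhibited as a $3$-Lie algebra (i.e.\ satisfies the fundamental identity), $\Omega$ is automatically a scalar $2$-cocycle for $\AKMSpara{A,\AKMSbracket{.,.,.}_\tau}$, and the extension is legitimate. No step here is a genuine obstacle; the only care required is the bookkeeping that places $\tau(c)=0$ in precisely the right slots so that $c$ behaves centrally in the induced ternary bracket, together with the observation that skew-symmetry of $\Omega$ is inherited from that of $\omega$ via the cyclic-sum structure of the trace construction.
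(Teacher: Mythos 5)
Your proposal is correct and follows essentially the same route as the paper: expand the induced ternary bracket to exhibit $\AKMSbracket{x,y,z}_{c,\tau}=\AKMSbracket{x,y,z}_\tau+\Omega(x,y,z)\,c$ and check that brackets containing $c$ vanish. Your preliminary verification that the extended $\tau$ remains a $\AKMSbracket{.,.}_c$-trace is a small addition the paper leaves implicit, but it does not change the argument.
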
 
\begin{proof}
Let $x,y,z \in A$:
\begin{align*}
\AKMSbracket{x,y,z}_{c,\tau} &= \tau\AKMSpara{x}\AKMSbracket{y,z}_c + \tau\AKMSpara{y}\AKMSbracket{z,x}_c + \tau\AKMSpara{z}\AKMSbracket{x,y}_c \\
&= \tau\AKMSpara{x}\AKMSpara{\AKMSbracket{y,z} + \omega\AKMSpara{y,z}c} + \tau\AKMSpara{y}\AKMSpara{\AKMSbracket{z,x} + \omega\AKMSpara{z,x} c} + \tau\AKMSpara{z}\AKMSpara{\AKMSbracket{x,y} + \omega\AKMSpara{x,y} c} \\
&= \AKMSpara{\tau\AKMSpara{x}\AKMSbracket{y,z} + \tau\AKMSpara{y}\AKMSbracket{z,x} + \tau\AKMSpara{z}\AKMSbracket{x,y}} \\
&+ \AKMSpara{\tau\AKMSpara{x}\omega\AKMSpara{y,z} + \tau\AKMSpara{y}\omega\AKMSpara{z,x} + \tau\AKMSpara{z}\omega\AKMSpara{x,y}} c. \\
&= \AKMSbracket{x,y,z}_\tau + \omega_\tau \AKMSpara{x,y,z} c
\end{align*}
The map $\omega_\tau \AKMSpara{x,y,z} = \tau\AKMSpara{x}\omega\AKMSpara{y,z} + \tau\AKMSpara{y}\omega\AKMSpara{z,x} + \tau\AKMSpara{z}\omega\AKMSpara{x,y}$ is a skew-symmetric $3$-linear form, and $\AKMSbracket{.,.,.}_{c,\tau}$ satisfies the fundamental identity, we have also:
\begin{align*}
\AKMSbracket{x,y,c}_{c,\tau}&= \tau\AKMSpara{x}\AKMSbracket{y,c}_c + \tau\AKMSpara{y}\AKMSbracket{c,x}_c + \tau\AKMSpara{c}\AKMSbracket{x,y}_c \\
&= 0. \qquad \Big(\AKMSbracket{y,c}_c = \AKMSbracket{c,x}_c = 0 \text{ and } \tau\AKMSpara{c} = 0.\Big)
\end{align*}
Therefore $\AKMSpara{\bar{A},\AKMSbracket{.,.,.}_{c,\tau}}$ is a central extension of $(A,\AKMSbracket{.,.,.}_\tau)$. \qed
\end{proof}

\begin{example}
Consider the $4$-dimensional Lie algebra $(A,\AKMSbracket{.,.})$ with basis $\{e_1,e_2,e_3,e_4\}$ defined by:
\[ \AKMSbracket{e_2,e_4}=e_3\ ;\ \AKMSbracket{e_3,e_4}=e_3, \]
(remaining brackets are either obtained by skew-symmetry or zero), and let $\omega$ be a skew-symmetric bilinear form on $A$. $\omega$ is fully defined by the scalars 
\[\omega_{ij}=\omega\AKMSpara{e_i,e_j}, 1\leq i<j \leq 4.\]
By solving the equations for $\omega$ to be a $2$-cocycle: \[\delta^2\omega\AKMSpara{e_i,e_j,e_k}=0, 1\leq i<j<k\leq 4,\]  we get the conditions:
\[ \omega_{13}=0 \text{ and } \omega_{23}=0. \]
Now, let $\alpha$ be a linear form on $A$, defined by $\alpha\AKMSpara{e_i}=\alpha_i, 1\leq i \leq 4$, we find that $\delta^1\alpha\AKMSpara{e_2,e_4}=\delta^1\alpha\AKMSpara{e_3,e_4}=\alpha_3$ and $\delta^1\alpha\AKMSpara{e_i,e_j}=0$ for other values of $i$ and $j$ ($i<j$).
Now consider the trace map $\tau$ such that $\tau\AKMSpara{e_1}=1$ and $\tau\AKMSpara{e_i}=0, i \neq 1$, and the $2$-cocycles $\lambda$ and $\mu$ defined by:
\[ \lambda\AKMSpara{e_1,e_2}=1 \]
and
\[ \mu\AKMSpara{e_2,e_4}=1\ ;\  \mu\AKMSpara{e_3,e_4}=-1. \]
Central extensions of $(A,\AKMSbracket{.,.})$ by $\lambda$ and $\mu$ are respectively given by ($\bar{A} = A \oplus \mathbb{K} c$):
\[ \AKMSbracket{e_1,e_2}_\lambda = c\ ;\ \AKMSbracket{e_2,e_4}_\lambda = e_3\ ;\  \AKMSbracket{e_3,e_4}_\lambda = e_3 \]
and 
\[ \AKMSbracket{e_2,e_4}_\mu = e_3 + c\ ;\ \AKMSbracket{e_3,e_4}_\mu = e_3 - c. \]

$3$-Lie algebras induced by $(A,\AKMSbracket{.,.})$ and by these central extensions are given by:
\[ \AKMSbracket{e_1,e_2,e_4}_\tau = e_3\ ;\ \AKMSbracket{e_1,e_3,e_4}_\tau = e_3, \]
\[ \AKMSbracket{e_1,e_2,e_4}_{\tau,\lambda} = e_3\ ;\ \AKMSbracket{e_1,e_3,e_4}_{\tau,\lambda} = e_3 \] and
\[ \AKMSbracket{e_1,e_2,e_4}_{\tau,\mu} = e_3 + c\ ;\ \AKMSbracket{e_1,e_3,e_4}_{\tau,\mu} = e_3 - c. \]
We can see that, here, the central extension given by $\lambda$ induces a trivial one, while the one given by $\mu$ induces a non-trivial one. This example shows also that the converse of Proposition \ref{AKMSB2triv_eq} is, in general, not true.
\end{example}


\section{$3$-Lie Algebras Induced by Lie Algebras in Low Dimensions}\label{AKMS:Classifications}
In this section, we give a list of all $3$-Lie algebras induced by Lie algebras in dimension $d\leq 5$, based on the classifications given in \cite{Filippov:nLie} and \cite{Bai:nLie:n+2}. For this, we shall use the following result:
\begin{proposition} \label{AKMS-3to2}
Let $\AKMSpara{A,\AKMSbracket{.,.,.}}$ be a $3$-Lie algebra, $\AKMSpara{e_i}_{1\leq i \leq d}$ a basis of $A$. If there exists $e_{i_0}$ in this base, such that the multiplication table of $\AKMSpara{A,\AKMSbracket{.,.,.}}$ is given by:
\[ \AKMSbracket{e_{i_0},e_j,e_k} = x_{jk} ; j\neq i_0, k\neq i_0, k\neq j \]
with $e_{i_0}$ and $x_{jk}$ linearly independent, then $\AKMSpara{A,\AKMSbracket{.,.,.}}$ is induced by a Lie algebra
\end{proposition}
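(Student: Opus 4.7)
The plan is to explicitly construct a Lie bracket and a trace on $A$ whose induced $3$-Lie bracket recovers the given one. The idea, already hinted at by the earlier remark attributed to Filippov, is that fixing one entry of the $3$-bracket produces a Lie bracket, and that the special basis element $e_{i_0}$ should play the role of the ``dual'' of the trace, in the sense that $\tau$ evaluates to $1$ on $e_{i_0}$ and vanishes on everything in the image of the $3$-bracket.

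More precisely, I would define
\[
[x,y] := [e_{i_0},x,y], \qquad x,y \in A,
\]
which is skew-symmetric and satisfies the Jacobi identity by the remark preceding the proposition, so $(A,[\cdot,\cdot])$ is a Lie algebra. Next I would define a linear form $\tau : A \to \mathbb{K}$ by setting
\[
\tau(e_{i_0}) = 1, \qquad \tau(x_{jk}) = 0 \text{ for all } j\neq i_0,\ k\neq i_0,\ j\neq k,
\]
and extending arbitrarily (for instance by zero) on a complement of $\operatorname{span}(e_{i_0},\{x_{jk}\})$. The linear independence hypothesis between $e_{i_0}$ and the family $\{x_{jk}\}$ is exactly what makes this assignment consistent and hence $\tau$ well-defined.

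I would then verify that $\tau$ is a trace for $[\cdot,\cdot]$: for any $y,z \in A$ one has $[y,z] = [e_{i_0},y,z]$, which by the multiplication table lies in $\operatorname{span}\{x_{jk}\}$, so $\tau([y,z]) = 0$. It therefore remains to compare $[\cdot,\cdot,\cdot]_\tau$ with the given $[\cdot,\cdot,\cdot]$ on triples of basis elements. For a triple $(e_{i_0},e_j,e_k)$ with $j,k\neq i_0$, the definition of $\tau$ gives
\[
[e_{i_0},e_j,e_k]_\tau = \tau(e_{i_0})[e_j,e_k] + \tau(e_j)[e_k,e_{i_0}] + \tau(e_k)[e_{i_0},e_j] = [e_{i_0},e_j,e_k] = x_{jk}.
\]
For a triple $(e_i,e_j,e_k)$ with $i,j,k$ all distinct from $i_0$, every term of $[\cdot,\cdot,\cdot]_\tau$ carries a factor $\tau(e_\ell) = 0$, so it vanishes, in agreement with the multiplication table (which only specifies non-zero brackets through $e_{i_0}$). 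By skew-symmetry this exhausts all basis triples, so $[\cdot,\cdot,\cdot]_\tau = [\cdot,\cdot,\cdot]$, and $(A,[\cdot,\cdot,\cdot])$ is induced by $(A,[\cdot,\cdot])$ via $\tau$.

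The only potentially subtle step is the well-definedness of $\tau$ together with the trace property; both rest entirely on the linear independence assumption in the statement, which guarantees that prescribing $\tau(e_{i_0})=1$ is compatible with annihilating every $x_{jk}$, and simultaneously that $\tau$ vanishes on the image of $[\cdot,\cdot]$. Once this is granted, the remainder is the direct substitution above. \qed
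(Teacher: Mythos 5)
Your overall strategy coincides with the paper's: define $[x,y]=[e_{i_0},x,y]$, obtain the Jacobi identity from the fundamental identity (the Filippov remark), build a trace $\tau$ ``dual'' to $e_{i_0}$, and check that the induced ternary bracket reproduces the given one. The paper's proof does exactly this, the only difference being that it takes $\tau$ to be the coordinate functional $\tau\bigl(\sum_k x_k e_k\bigr)=x_{i_0}$.

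That difference is where your write-up has a genuine gap. You prescribe $\tau(e_{i_0})=1$, $\tau(x_{jk})=0$, and $\tau=0$ on a complement of $\operatorname{span}(e_{i_0},\{x_{jk}\})$; nothing in this prescription forces $\tau(e_\ell)=0$ for $\ell\neq i_0$, yet that is precisely what you use in the final case: for $i,j,k\neq i_0$ one has $[e_i,e_j,e_k]_\tau=\tau(e_i)x_{jk}+\tau(e_j)x_{ki}+\tau(e_k)x_{ij}$, and the coefficients $\tau(e_\ell)$ are not controlled by your definition (for instance, if some $x_{jk}$ had a nonzero $e_{i_0}$-coordinate, any linear form killing all the $x_{jk}$ and taking value $1$ on $e_{i_0}$ would necessarily be nonzero on some other basis vector). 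Since the conditions $\tau(e_{i_0})=1$ and $\tau(e_\ell)=0$ for $\ell\neq i_0$ determine $\tau$ uniquely as the coordinate functional, the clean repair is to define $\tau$ that way from the outset, as the paper does: then $\tau(e_\ell)=0$ is automatic, and the single remaining point is $\tau(x_{jk})=0$, i.e.\ that each $x_{jk}$ has no $e_{i_0}$-component --- this is where the linear-independence hypothesis is consumed, and it is the same condition your construction implicitly needs for consistency. Your verification for triples containing $e_{i_0}$ is unaffected (the terms $\tau(e_j)[e_k,e_{i_0}]$ and $\tau(e_k)[e_{i_0},e_j]$ vanish because the brackets do, not because of the values of $\tau$), and with the modified $\tau$ the rest of the argument goes through exactly as in the paper.
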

\begin{proof}
We define a bilinear skew-symmetric map $\AKMSbracket{.,.}$ on $A$ and a form $\tau : A \to \mathbb{K}$ by:
\[ \AKMSbracket{e_j,e_k}=x_{jk}, j\neq i_0, k\neq i_0, k\neq j \text{ and } \AKMSbracket{e_{i_0},e_j} = 0 \]
and
\[ \tau(x) = \tau\AKMSpara{\sum_{k=0}^d x_k e_k} = x_{i_0} \]
$\AKMSbracket{.,.}$ satisfies the Jacobi identity:
\begin{align*}
\AKMSbracket{e_j,\AKMSbracket{e_k,e_l}} &= \AKMSbracket{e_{i_0},e_j,\AKMSbracket{e_{i_0},e_k,e_l}}\\
&= \AKMSbracket{\AKMSbracket{e_{i_0},e_j,e_{i_0}},e_k,e_l} + \AKMSbracket{e_{i_0},\AKMSbracket{e_{i_0},e_j,e_k},e_l} + \AKMSbracket{e_{i_0},e_k,\AKMSbracket{e_{i_0,}e_j,e_l}} \\
&= \AKMSbracket{\AKMSbracket{e_j,e_k},e_l}+\AKMSbracket{e_k,\AKMSbracket{e_j,e_l}}
\end{align*}
The obtained Lie bracket $\AKMSbracket{.,.}$ and the trace $\tau$ given above indeed induce the ternary bracket considered above:
\begin{align*}
\AKMSbracket{e_{i_0},e_j,e_k}_\tau &= \tau(e_{i_0})\AKMSbracket{e_j,e_k} + \tau(e_j)\AKMSbracket{e_k,e_{i_0}} + \tau(e_k)\AKMSbracket{e_{i_0},e_j}\\
&= \tau(e_{i_0})\AKMSbracket{e_j,e_k} \\
&= x_{jk}\\
&= \AKMSbracket{e_{i_0},e_j,e_k}
\end{align*}
for $i \neq i_0$:
\[ \AKMSbracket{e_i,e_j,e_k}_\tau = \tau(e_i)\AKMSbracket{e_j,e_k} + \tau(e_j)\AKMSbracket{e_k,e_i} + \tau(e_k)\AKMSbracket{e_i,e_j} = 0 = \AKMSbracket{e_i,e_j,e_k} \] 
\qed
\end{proof}

\begin{theorem}[\cite{Filippov:nLie} $3$-Lie algebras of dimension less than or equal to $4$] \label{AKMSd4}
Any $3$-Lie algebra $A$ of dimension less than or equal to $4$ is isomorphic to one of the following algebras: (omitted brackets are obtained by skew-symmetry, $\AKMSpara{e_i}_{1 \leq i \leq dim A}$ is a basis of $A$)
\begin{enumerate}
\item If $dim A < 3$ then $A$ is abelian.
\item If $dim A = 3$, then we have 2 cases :
\begin{enumerate}
\item $A$ is abelian.
\item $\AKMSbracket{e_1,e_2,e_3} = e_1.$
\end{enumerate}
\item if $dim A = 4$ then we have the following cases :
\begin{enumerate}
\item $A$ is abelian.
\item $\AKMSbracket{e_2,e_3,e_4} = e_1$.
\item $\AKMSbracket{e_1,e_2,e_3}=e_1$.
\item $\AKMSbracket{e_1,e_2,e_4} = a e_3 + b e_4 ; \AKMSbracket{e_1,e_2,e_3} = c e_3 + d e_4$, with $C=
\begin{pmatrix}
a & b \\
c & d
\end{pmatrix}$ an invertible matrix. Two such algebras, defined by matrices $C_1$ and $C_2$, are isomorphic if and only if there exists a scalar $\alpha$ and an invertible matrix $B$ such that $C_2 = \alpha B C_1 B^{-1}$.
\item $\AKMSbracket{e_2,e_3,e_4}= e_1 ; \AKMSbracket{e_1,e_3,e_4}=a e_2 ; \AKMSbracket{e_1,e_2,e_4}= b e_3$ ($a,b \neq 0$).
\item  $\AKMSbracket{e_2,e_3,e_4}= e_1 ; \AKMSbracket{e_1,e_3,e_4}=a e_2 ; \AKMSbracket{e_1,e_2,e_4}= b e_3 ; \AKMSbracket{e_1,e_2,e_3}= c e_4$ ($a,b,c \neq 0$).
\end{enumerate}

\end{enumerate}
\end{theorem}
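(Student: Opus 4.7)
The plan is to attack the classification dimension by dimension, exploiting skew-symmetry first to eliminate low dimensions cheaply, and then using the fundamental identity together with linear changes of basis to reduce the dimension $4$ case to a finite list of normal forms.

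For $\dim A \leq 2$, there is nothing to prove: a totally skew-symmetric $3$-linear map on a space of dimension less than $3$ evaluates to $0$ on any triple (since two of the three arguments must be linearly dependent in a basis expansion), so the bracket is identically zero and $A$ is abelian. For $\dim A = 3$, with basis $(e_1,e_2,e_3)$, skew-symmetry leaves only the single value $v = \AKMSbracket{e_1,e_2,e_3} \in A$ free. If $v = 0$ then $A$ is abelian. If $v \neq 0$, I would complete $v$ to a basis and relabel it as $e_1$, reaching the form $\AKMSbracket{e_1,e_2,e_3} = e_1$. The fundamental identity is automatic here: both sides of \eqref{AKMS-FI} involve a bracket of five arguments in a $3$-dimensional space, so by skew-symmetry both sides collapse.

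For $\dim A = 4$, I would first record that, up to skew-symmetry, the bracket is determined by the four vectors $v_i = \AKMSbracket{e_1,\ldots,\hat e_i,\ldots,e_4}$. Equivalently, in the presence of a chosen volume form, the bracket is a linear map $\Phi : A^{\ast} \to A$ whose matrix $M$ is the object to classify. The Filippov identity \eqref{AKMS-FI} translates into a system of quadratic equations on $M$, and the group $GL(A)$ acts on $M$ by a twisted conjugation (together with a scalar from the volume form), so the classification reduces to classifying matrices $M$ modulo this action subject to the quadratic constraints. I would then stratify by the rank of $M$ (equivalently by $\dim \AKMSbracket{A,A,A}$): rank $0$ gives the abelian case (a); rank $1$ splits according to whether the image lies inside the kernel or not, producing cases (b) and (c); rank $2$ forces the image to be a $2$-plane stable under the associated operators, and the matrix normal form on this plane produces case (d), where the residual freedom is exactly conjugation by $B \in GL_2$ together with a global scalar $\alpha$; rank $3$ and rank $4$ give (e) and (f) respectively after diagonalizing the induced operators by a basis change.

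The main obstacle is the dimension $4$ analysis: one must simultaneously solve the quadratic constraints from \eqref{AKMS-FI}, organize the linear changes of basis so that they are visibly available in each stratum, and then prove that the listed families are pairwise non-isomorphic. For case (d) the isomorphism criterion $C_2 = \alpha B C_1 B^{-1}$ needs to be extracted carefully: it amounts to checking that a change of basis preserving both $e_1,e_2$ (up to the subspace they span) and $e_3,e_4$ (up to their span) acts by $GL_2 \times GL_2$, and that the two $GL_2$ factors enter as $B$ (acting on $\operatorname{Im} \Phi = \langle e_3,e_4 \rangle$) and its inverse (acting on the complementary $\langle e_1, e_2 \rangle$), with the overall determinant absorbed into $\alpha$. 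Non-isomorphism between (d), (e), (f) and the others would be certified by dimension invariants ($\dim \AKMSbracket{A,A,A}$, dimension of the center, of the kernel of $\Phi$), which I would tabulate at the end to close the classification.
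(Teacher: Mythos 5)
First, a point of comparison: the paper does not prove this statement at all --- it is quoted verbatim from Filippov's classification \cite{Filippov:nLie} and used as an external input, so there is no internal proof to match your argument against. Your treatment of dimensions $\leq 3$ is complete and correct: the bracket vanishes identically below dimension $3$, in dimension $3$ it is determined by the single vector $\AKMSbracket{e_1,e_2,e_3}$, and the fundamental identity is automatic for any totally skew-symmetric $n$-linear bracket on an $n$-dimensional space (your ``both sides collapse'' phrasing is loose --- each side is a sum of nested ternary brackets, not a single five-argument bracket --- but the verification that both sides equal $\sum_i v_i c_i v$ is a one-line computation and the conclusion stands).

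The dimension-$4$ part, however, is a roadmap rather than a proof, and the roadmap skips exactly the step that constitutes the theorem. Encoding the bracket as a matrix $M$ and stratifying by $\operatorname{rank} M = \dim\AKMSbracket{A,A,A}$ is the right start, but the claim that each rank stratum collapses to the single listed normal form is precisely what the quadratic constraints from \eqref{AKMS-FI} must be solved to establish, and you never solve them. For instance, in rank $2$ the configuration $\AKMSbracket{e_1,e_2,e_3}=e_1$, $\AKMSbracket{e_1,e_2,e_4}=e_2$ has a two-dimensional derived algebra but is \emph{not} a $3$-Lie algebra (the fundamental identity fails for $x=(e_1,e_3)$, $y=(e_1,e_2,e_4)$); ruling out all such configurations and showing the derived algebra must sit in a complement on which the bracket acts as in case (d) is the actual work. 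Two further points need repair: in (e) and (f) you propose to ``diagonalize the induced operators,'' but over a general field of characteristic $0$ one can only rescale parameters by squares, which is why the nonzero constants $a,b,c$ survive in the statement rather than being normalized to $1$; and in the isomorphism criterion for (d) the complement $\langle e_1,e_2\rangle$ enters only through the determinant of the basis change (contributing the scalar $\alpha$), not as a second $GL_2$ factor acting by $B^{-1}$ --- one must also check that adding elements of the derived algebra to $e_1,e_2$ does not perturb the operator, which holds because all brackets with two arguments in $\langle e_3,e_4\rangle$ vanish. As written, the proposal would need all of these computations carried out before it could be called a proof.
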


\begin{theorem}[\cite{Bai:nLie:n+2} $5$-dimensional $3$-Lie algebras] \label{AKMSd5}
Let $\mathbb{K}$ be an algebraically closed field. Any $5$-dimensional $3$-Lie algebra $A$ defined with respect to a basis $\left\{e_1,e_2,e_3,e_4,e_5\right\}$ is isomorphic to one of the algebras listed bellow, where $A^1$ denotes $\AKMSbracket{A,A,A}$ :

\begin{enumerate}
\item If $dim A^1 = 0$ then $A$ is abelian.
\item If $dim A^1 = 1$, let $A^1=\langle e_1 \rangle$, then we have :
\begin{enumerate}
\item $A^1 \subseteq Z(A)$ : $\AKMSbracket{e_2,e_3,e_4}=e_1$.
\item $A^1 \nsubseteq Z(A)$ : $\AKMSbracket{e_1,e_2,e_3}=e_1$.
\end{enumerate}
\item If $dim A^1 = 2$, let $A^1 = \langle e_1,e_2 \rangle$, then we have :
\begin{enumerate}
\item $\AKMSbracket{e_2,e_3,e_4}=e_1 ; \AKMSbracket{e_3,e_4,e_5}=e_2$.
\item $\AKMSbracket{e_2,e_3,e_4}=e_1 ; \AKMSbracket{e_2,e_4,e_5}=e_2 ; \AKMSbracket{e_1,e_4,e_5}=e_1$.
\item $\AKMSbracket{e_2,e_3,e_4}=e_1 ; \AKMSbracket{e_1,e_3,e_4}=e_2$.
\item $\AKMSbracket{e_2,e_3,e_4}=e_1 ; \AKMSbracket{e_1,e_3,e_4}=e_2 ; \AKMSbracket{e_2,e_4,e_5}=e_2 ; \AKMSbracket{e_1,e_4,e_5}=e_1$.
\item $\AKMSbracket{e_2,e_3,e_4}= \alpha e_1 + e_2 ; \AKMSbracket{e_1,e_3,e_4}=e_2$.
\item $\AKMSbracket{e_2,e_3,e_4}=\alpha e_1+e_2 ; \AKMSbracket{e_1,e_3,e_4}=e_2 ; \AKMSbracket{e_2,e_4,e_5}=e_2 ; \AKMSbracket{e_1,e_4,e_5}=e_1$.
\item $\AKMSbracket{e_1,e_3,e_4}=e_1 ; \AKMSbracket{e_2,e_3,e_4}=e_2$.
\end{enumerate}
where $\alpha \in \mathbb{K} \setminus \left\{0\right\}$
\item If $dim A^1 = 3$, let $A^1=\langle e_1,e_2,e_3 \rangle$, then we have :
\begin{enumerate}
\item $\AKMSbracket{e_2,e_3,e_4}=e_1 ; \AKMSbracket{e_2,e_4,e_5}=-e_2 ; \AKMSbracket{e_3,e_4,e_5}=e_3$.
\item $\AKMSbracket{e_2,e_3,e_4}=e_1 ; \AKMSbracket{e_3,e_4,e_5}=e_3 + \alpha e_2 ; \AKMSbracket{e_2,e_4,e_5}=e_3 ; \AKMSbracket{e_1,e_4,e_5}=e_1$.
\item $\AKMSbracket{e_2,e_3,e_4}=e_1 ; \AKMSbracket{e_3,e_4,e_5}=e_3 ; \AKMSbracket{e_2,e_4,e_5}=e_2 ; \AKMSbracket{e_1,e_4,e_5}=2e_1$.
\item $\AKMSbracket{e_2,e_3,e_4}=e_1 ; \AKMSbracket{e_1,e_3,e_4}=e_2 ; \AKMSbracket{e_1,e_2,e_4}=e_3$.
\item $\AKMSbracket{e_1,e_4,e_5}=e_1 ; \AKMSbracket{e_2,e_4,e_5}=e_3 ; \AKMSbracket{e_3,e_4,e_5}=\beta e_2+(1+\beta)e_3$, $\beta \in \mathbb{K} \setminus \left\{0,1\right\}$.
\item $\AKMSbracket{e_1,e_4,e_5}=e_1 ; \AKMSbracket{e_2,e_4,e_5}=e_2 ; \AKMSbracket{e_3,e_4,e_5}=e_3$.
\item $\AKMSbracket{e_1,e_4,e_5}=e_2 ; \AKMSbracket{e_2,e_4,e_5}=e_3 ; \AKMSbracket{e_3,e_4,e_5}=s e_1 + t e_2 + u e_3$. And $3$-Lie algebras corresponding to this case with coefficients $s,t,u$ and $s',t',u'$ are isomorphic if and only if there exists a non-zero element $r \in K$ such that :
\[ s=r^3 s' ; t=r^2 t' ; u=r u' \]
\end{enumerate}
\item If $dim A^1 = 4$, let $A^1 = \langle e_1,e_2,e_3,e_4 \rangle$, then we have :
\begin{enumerate}
\item $\AKMSbracket{e_2,e_3,e_4}=e_1 ; \AKMSbracket{e_3,e_4,e_5}=e_2 ; \AKMSbracket{e_2,e_4,e_5}=e_3 ; \AKMSbracket{e_2,e_3,e_5}=e_4$.
\item $\AKMSbracket{e_2,e_3,e_4}=e_1 ; \AKMSbracket{e_1,e_3,e_4}=e_2 ; \AKMSbracket{e_1,e_2,e_4}=e_3 ; \AKMSbracket{e_1,e_2,e_3} = e_4$.
\end{enumerate}
\end{enumerate}
\end{theorem}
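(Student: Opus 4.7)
The plan is to classify all $5$-dimensional $3$-Lie algebras over an algebraically closed field of characteristic $0$ by stratifying according to $k = \dim A^1$, where $A^1 = \AKMSbracket{A,A,A}$. Since $A^1$ is an ideal of $A$, one first argues $k \le 4$: a perfect $5$-dimensional $3$-Lie algebra is ruled out by combining Ling's theorem (no simple $3$-Lie algebra exists in dimension $5$) with a direct analysis of the fundamental identity under total skew-symmetry, so it suffices to treat $k\in\{0,1,2,3,4\}$ separately.

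For each fixed $k$, choose a basis $\{e_1,\dots,e_5\}$ of $A$ with $A^1 = \langle e_1,\dots,e_k\rangle$ and write each bracket $\AKMSbracket{e_i,e_j,e_\ell}$ as a linear combination of $e_1,\dots,e_k$, introducing structure constants $c^m_{ij\ell}$. These constants are subject to total skew-symmetry, to the fundamental identity (\ref{AKMS-FI}) which yields a large system of quadratic polynomial equations, and to the condition $\dim A^1 = k$ which pins the stratum. After solving the system, one quotients by the group of basis changes respecting (or permuting) the filtration $A^1\subset A$ to obtain canonical representatives.

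The case analysis proceeds from small to large $k$. The trivial case $k=0$ gives the abelian algebra; $k=1$ yields only the two families in item 2, distinguished by whether $A^1\subseteq Z(A)$. For $k=2,3,4$ the fundamental identity becomes genuinely restrictive, and a careful row-reduction on the structure constants combined with base changes inside $A^1$ and in a chosen complement of $A^1$ produces the finite lists above. Parameters survive exactly in those subcases where the residual action on structure constants has orbits of positive codimension.

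The main obstacle is case $k=3$, subcase (g): the bracket is fully determined by three scalars $(s,t,u)$, and the filtration-preserving subgroup of $GL_5(\mathbb{K})$ reduces, on this normal form, to the one-parameter family of rescalings $e_5\mapsto r^{-1}e_5$, which (to keep the structure constants supported on $e_1,e_2,e_3$) forces the compensating rescalings $e_1\mapsto r^3 e_1$, $e_2\mapsto r^2 e_2$, $e_3\mapsto r\, e_3$. The delicate step is proving that no further change of basis preserves the normal form, so that $(s,t,u)\sim(s',t',u')$ if and only if $(s,t,u)=(r^3 s',\,r^2 t',\,r u')$; this requires explicit computation of the stabilizer of the bracket inside the full automorphism group and constitutes the heart of the isomorphism half of the classification.
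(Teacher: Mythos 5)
First, note that the paper itself does not prove this theorem: it is quoted from \cite{Bai:nLie:n+2} and used as input for the later classification of induced $3$-Lie algebras, so there is no internal proof to compare yours against. Your outline does follow the same general strategy as that reference — stratify by $k=\dim A^1$, express the brackets through structure constants supported in $A^1$, impose total skew-symmetry and the fundamental identity, and normalize under basis changes preserving the flag $A^1\subset A$ — so the overall approach is the right one.

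As a proof, however, the proposal has genuine gaps. The preliminary bound $\dim A^1\le 4$ does not follow from Ling's theorem as you invoke it: that theorem excludes a \emph{simple} $5$-dimensional $3$-Lie algebra, but a perfect algebra need not be simple, so ruling out $A^1=A$ requires a separate structural argument (in \cite{Bai:nLie:n+2} this is a lemma to the effect that an $(n+2)$-dimensional $n$-Lie algebra satisfies $\dim A^1\le n+1$); the ``direct analysis of the fundamental identity'' you gesture at is not supplied. More seriously, for $k=2,3,4$ the entire content of the theorem — that every solution of the resulting quadratic system is equivalent to exactly one of the listed normal forms, and that the listed forms (including the parameter conditions on $\alpha$, $\beta$ and on $(s,t,u)$) are pairwise non-isomorphic — is asserted (``a careful row-reduction \dots produces the finite lists above'') rather than carried out; even the subcase you correctly single out as delicate, item 4(g), is described but not computed. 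What you have is a correct plan for a proof, not a proof: none of the case analysis that actually yields the lists is present.
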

The $3$-Lie algebras which are induced by Lie algebras are given by the following proposition:
\begin{proposition}
Let $\mathbb{K}$ be an algebraically closed field of characteristic $0$. According to Theorems \ref{AKMSd4} and \ref{AKMSd5}, the $3$-Lie algebras induced by Lie algebras of dimension $d \leq 5$ are:
\begin{itemize}
\item $d=3$ Theorem \ref{AKMSd4}: 2.
\item $d=4$ Theorem \ref{AKMSd4}: 3.: a,b,c,d,e.
\item $d=5$ Theorem \ref{AKMSd5}: 1. 2. 3. 4.
\end{itemize}
\end{proposition}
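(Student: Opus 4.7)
The plan is to proceed case by case through the classifications of Theorems \ref{AKMSd4} and \ref{AKMSd5}, verifying for each representative either that it is induced by some Lie algebra or that it cannot be. Since being induced by a Lie algebra is invariant under $3$-Lie algebra isomorphism (one transports both the Lie bracket and the trace along the isomorphism), this suffices to establish the statement.

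For the positive direction the tool is Proposition \ref{AKMS-3to2}: one must locate, in each listed multiplication table, a basis vector $e_{i_0}$ that appears in every nonzero bracket, such that all bracket outputs are linearly independent from $e_{i_0}$. The abelian cases (item 1 of each theorem) are immediate, since any abelian $3$-Lie algebra is induced by the corresponding abelian Lie algebra with any trace. For the remaining listed cases a direct inspection reveals a suitable $e_{i_0}$: typically $e_4$, or one of the two common indices when several nonzero brackets share more than one element. For the $d=4$ items 3.b--3.e an element from $\{e_3, e_4\}$ (distinct from the output in the single-bracket subcases) works; for the $d=5$ items 2, 3, 4 the element $e_4$ (or in some subcases of item 4 either $e_4$ or $e_5$) appears in every bracket. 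In each case the outputs, being supported on basis vectors other than $e_{i_0}$, are patently linearly independent of $e_{i_0}$, so Proposition \ref{AKMS-3to2} produces an inducing Lie bracket and trace.

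For the negative direction the classifications produce three cases not appearing in the proposition: case 3.f of Theorem \ref{AKMSd4} and cases 5.a, 5.b of Theorem \ref{AKMSd5}. Here the key observation is Theorem \ref{AKMSsolv2}: any $3$-Lie algebra induced from a Lie algebra is $2$-step solvable, i.e.\ $A^1 := \AKMSbracket{A,A,A}$ is abelian under the ternary bracket, so in particular $D^2(A)=\{0\}$. In each of these three exceptional algebras one reads off directly that $A^1 = \langle e_1,e_2,e_3,e_4\rangle$, and a triple of elements in this subspace already yields a nonzero bracket; for instance $\AKMSbracket{e_2,e_3,e_4}=e_1\neq 0$ works in all three cases, so $D^2(A) \neq \{0\}$. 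Consequently none of these algebras can be induced by any Lie algebra via any trace.

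The only real difficulty is organisational: the positive direction is a moderately long but mechanical enumeration in which the only thing to check in each case is the existence of a common basis vector across all nonzero brackets, while the negative direction is dispatched uniformly by the solvability obstruction of Theorem \ref{AKMSsolv2}. No finer structural invariant is required, and no non-trivial computation is needed beyond reading off the multiplication tables.
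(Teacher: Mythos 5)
Your proposal is correct and follows essentially the same route as the paper: Proposition \ref{AKMS-3to2} certifies the listed cases, and the solvability obstruction of Theorem \ref{AKMSsolv2} excludes the remaining ones (3.f of Theorem \ref{AKMSd4} and 5.a, 5.b of Theorem \ref{AKMSd5}), whose derived algebras are not abelian. The only slip is cosmetic: in case 3.d of Theorem \ref{AKMSd4} the basis vector common to both nonzero brackets must be taken from $\{e_1,e_2\}$ rather than $\{e_3,e_4\}$, which does not affect the argument.
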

\begin{proof}
By applying Proposition \ref{AKMS-3to2}, the algebras given in Theorem \ref{AKMSd4} 2. and 3. a,b,c,d,e and Theorem \ref{AKMSd5} 1.,2.,3. and 4. are all induced by Lie algebras, remaining algebras' derived algebras are not abelian and then they cannot be induced by Lie algebras (Theorem \ref{AKMSsolv2}). \qed
\end{proof}

\subsection{From Lie Algebras to $3$-Lie Algebras}
We list, below, all $3$ and $4$-dimensional Lie algebras and all $3$-Lie algebras they may induce, $3$-dimensional algebras are classified in \cite{Patera_et_al:invariants} and $4$-dimensional ones, partially, in \cite{degraaf4solvable}. For every Lie algebra, we compute all the trace maps and the induced $3$-Lie algebra using these trace maps.
\begin{theorem}[$3$-dimensional Lie algebras \cite{Patera_et_al:invariants}]
Let $\mathfrak{g}$ be a Lie algebra and $\left\{e_1,e_2,e_3\right\}$ a basis of $\mathfrak{g}$, then $\mathfrak{g}$ is isomorphic to one of the following algebras: (Remaining brackets are either obtained by skew-symmetry or zero)
\begin{enumerate}
\item The abelian Lie algebra $\AKMSbracket{x,y}=0, \forall x,y \in \mathfrak{g}$.
\item $ L(3,-1) : \AKMSbracket{e_1,e_2}=e_2$.
\item $ L(3,1) : \AKMSbracket{e_1,e_2}=e_3$.
\item $ L(3,2,a) : \AKMSbracket{e_1,e_3}=e_1 ; \AKMSbracket{e_2,e_3}=a e_2 ; 0 < |a| \leq 1$.
\item $ L(3,3) : \AKMSbracket{e_1,e_3}=e_1 ; \AKMSbracket{e_2,e_3}=e_1+e_2$.
\item $ L(3,4,a) : \AKMSbracket{e_1,e_3}= a e_1-e_2 ; \AKMSbracket{e_2,e_3} = e_1+a e_2 ; a\geq 0$.
\item $ L(3,5) : \AKMSbracket{e_1,e_2}=e_1 ; \AKMSbracket{e_1,e_3}=-2e_2 ; \AKMSbracket{e_2,e_3}=e_3$. 
\item $ L(3,6) : \AKMSbracket{e_1,e_2}=e_3 ; \AKMSbracket{e_1,e_3}=-e_2 ; \AKMSbracket{e_2,e_3}=e_1$.
\end{enumerate}
\end{theorem}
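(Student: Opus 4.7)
The plan is to stratify by $d = \dim [\mathfrak{g},\mathfrak{g}]$, which takes values in $\{0,1,2,3\}$ and is an isomorphism invariant. Case $d=0$ is the abelian algebra, item~1. For $d=1$, fix $v$ spanning $[\mathfrak{g},\mathfrak{g}]$ and split into two subcases: either $[\mathfrak{g},\mathfrak{g}] \subseteq Z(\mathfrak{g})$, in which case any nonzero bracket can, after a linear change of variables, be brought to the Heisenberg form $[e_1,e_2]=e_3$ of $L(3,1)$; or there exists $u$ with $[u,v]=\lambda v \neq 0$, and rescaling yields $[e_1,e_2]=e_2$, completed by a central $e_3$, producing $L(3,-1)$.

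For $d=2$, a short Jacobi-identity calculation rules out $[[\mathfrak{g},\mathfrak{g}],[\mathfrak{g},\mathfrak{g}]]\neq 0$: if the derived subalgebra were two-dimensional and non-abelian, writing out its brackets in an adapted basis and imposing Jacobi on the triple $(x,y,e_3)$ forces $\dim[\mathfrak{g},\mathfrak{g}]\leq 1$. Hence $[\mathfrak{g},\mathfrak{g}]$ is an abelian two-dimensional ideal. Pick $e_3 \notin [\mathfrak{g},\mathfrak{g}]$ and set $T = \operatorname{ad}_{e_3}|_{[\mathfrak{g},\mathfrak{g}]}$; since $[\mathfrak{g},\mathfrak{g}]$ is abelian, every nonzero bracket of $\mathfrak{g}$ lies in the image of $T$, so $T$ is surjective and hence invertible on the two-dimensional space $[\mathfrak{g},\mathfrak{g}]$. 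The remaining isomorphism freedom is conjugation of $T$ by $\operatorname{GL}([\mathfrak{g},\mathfrak{g}])$, rescaling $e_3$ which rescales $T$, and shifting $e_3$ by an element of $[\mathfrak{g},\mathfrak{g}]$ which does not change $T$. The classification thus reduces to that of invertible $2\times 2$ real matrices up to conjugation and nonzero scalar multiple, and the real Jordan normal form produces exactly three families: two distinct real eigenvalues with ratio $a \in (-1,0)\cup (0,1]$ (giving $L(3,2,a)$), a single non-diagonal Jordan block normalized to eigenvalue $1$ (giving $L(3,3)$), and a complex-conjugate pair of eigenvalues $a \pm i$ with $a \geq 0$ after scaling (giving $L(3,4,a)$).

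For $d=3$, $\mathfrak{g}$ is perfect; the radical is solvable, and any nonzero proper radical would force $\mathfrak{g}/\operatorname{rad}(\mathfrak{g})$ to be semisimple of dimension $1$ or $2$, which is impossible, so $\operatorname{rad}(\mathfrak{g})=0$ and $\mathfrak{g}$ is semisimple. By the classical classification of three-dimensional real semisimple Lie algebras, $\mathfrak{g} \cong \mathfrak{sl}(2,\mathbb{R})$ or $\mathfrak{g} \cong \mathfrak{so}(3,\mathbb{R})$, distinguished by the signature of the Killing form; explicit bases (an $\mathfrak{sl}_2$-triple in the first case, an orthonormal cross-product basis in the second) produce the tables $L(3,5)$ and $L(3,6)$ after a linear relabeling. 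The main obstacle is the case $d=2$: verifying both the exhaustiveness of the three families and the non-redundancy of the stated parameter ranges ($0 < |a| \leq 1$ for $L(3,2,a)$ and $a \geq 0$ for $L(3,4,a)$) requires careful bookkeeping of which orbits of invertible $2\times 2$ matrices under the conjugation-and-scaling action are hit and which parameter values within each orbit serve as canonical representatives.
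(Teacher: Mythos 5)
The paper does not prove this theorem at all: it is quoted verbatim from the cited reference \cite{Patera_et_al:invariants} and used as a black box, so there is no internal proof to compare against. Your argument is a correct, self-contained derivation along the standard lines (stratify by $\dim[\mathfrak{g},\mathfrak{g}]$; for $d=1$ split on centrality of the derived line; for $d=2$ show the derived ideal is abelian and reduce to $\operatorname{ad}_{e_3}$ viewed as an invertible $2\times 2$ matrix up to conjugation and nonzero scaling, handled by real Jordan form; for $d=3$ invoke semisimplicity and the two real forms). Two small bookkeeping slips in the $d=2$ case, which you yourself flag as the delicate part: the diagonalizable case must also include equal eigenvalues (the scalar matrix, giving $a=1$), so ``two distinct real eigenvalues'' is too restrictive; and your stated range $a\in(-1,0)\cup(0,1]$ omits $a=-1$, whereas the theorem's condition $0<|a|\leq 1$ correctly includes it (eigenvalue ratio $-1$ is a genuine orbit, the Poincar\'e/$\mathfrak{e}(1,1)$-type algebra). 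With those ranges corrected the argument is complete and matches the list in the statement.
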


\begin{remark}
The classification given above is for the ground field $\mathbb{K} = \mathbb{R}$, if $\mathbb{K} = \mathbb{C}$ then $L\AKMSpara{3,2,\frac{x-i}{x+i}}$ is isomorphic to $L(3,4,x)$ and $L(3,5)$ is isomorphic to $L(3,6)$.
\end{remark}

\begin{theorem}[Solvable $4$-dimensional Lie algebras \cite{degraaf4solvable}]
Let $\mathfrak{g}$ be a solvable Lie algebra, and $\left\{e_1,e_2,e_3,e_4\right\}$ a basis of $\mathfrak{g}$, then $\mathfrak{g}$ is isomorphic to one of the following algebras: (Remaining brackets are either obtained by skew-symmetry or zero)

\begin{enumerate}
\item The abelian Lie algebra $\AKMSbracket{x,y}=0, \forall x,y \in \mathfrak{g}$.
\item $M^2$ : $\AKMSbracket{e_1,e_4}=e_1 ; \AKMSbracket{e_2,e_4}=e_2 ; \AKMSbracket{e_3,e_4}=e_3$.
\item $M^3_a$ : $\AKMSbracket{e_1,e_4}=e_1 ; \AKMSbracket{e_2,e_4}=e_3 ; \AKMSbracket{e_3,e_4}=-a e_2 + (a+1)e_3$.
\item $M^4$ : $\AKMSbracket{e_2,e_4}=e_3 ; \AKMSbracket{e_3,e_4}=e_3$.
\item $M^5$ : $\AKMSbracket{e_2,e_4}=e_3$.
\item $M^6_{a,b}$ : $\AKMSbracket{e_1,e_4}=e_2 ; \AKMSbracket{e_2,e_4}=e_3 ; \AKMSbracket{e_3,e_4}= ae_1+be_2+e_3$.
\item $M^7_{a,b}$ : $\AKMSbracket{e_1,e_4}=e_2 ; \AKMSbracket{e_2,e_4}=e_3 ; \AKMSbracket{e_3,e_4}= ae_1+be_2$ ($a=b \neq 0$ or $a=0$ or $b=0$).
\item $M^8$ : $\AKMSbracket{e_1,e_2}=e_2 ; \AKMSbracket{e_3,e_4}=e_4$.
\item $M^9_a$ : $\AKMSbracket{e_1,e_4}=e_1+ae_2 ; \AKMSbracket{e_2,e_4}=e_1 ; \AKMSbracket{e_1,e_3}=e_1 ; \AKMSbracket{e_2,e_3}=e_2$ ($X^2-X-a$ has no root in $\mathbb{K}$).
\item $M^{11}$ : $\AKMSbracket{e_1,e_4}=e_1 ; \AKMSbracket{e_3,e_4}=e_3 ; \AKMSbracket{e_1,e_3}=e_2$.
\item $M^{12}$ : $\AKMSbracket{e_1,e_4}=e_1 ; \AKMSbracket{e_2,e_4}=e_2 ; \AKMSbracket{e_3,e_4}=e_3 ; \AKMSbracket{e_1,e_3}=e_2$.
\item $M^{13}_a$ : $\AKMSbracket{e_1,e_4}=e_1+ae_3 ; \AKMSbracket{e_2,e_4}=e_2 ; \AKMSbracket{e_3,e_4}=e_1 ; \AKMSbracket{e_1,e_3}=e_2 $.
\item $M^{14}_a$ : $\AKMSbracket{e_1,e_4}=ae_3 ; \AKMSbracket{e_3,e_4}=e_1 ; \AKMSbracket{e_1,e_3}=e_2$. ($M^{14}_a$ is isomorphic to $M^{14}_b$ if and only if $a=\alpha^2 b$ for some $\alpha \neq 0$).
\end{enumerate}
\end{theorem}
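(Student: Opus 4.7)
The plan is to reproduce the classification by a case analysis on the nilradical $N$ of $\mathfrak{g}$. By Mubarakzyanov's theorem, for a solvable Lie algebra over a field of characteristic zero one has $\dim N \geq \tfrac{1}{2}\dim \mathfrak{g}$, so here $\dim N \in \{2,3,4\}$. First I would dispose of the nilpotent case $\dim N = 4$: a direct argument using that the derived subalgebra of a $4$-dimensional nilpotent Lie algebra has dimension at most $2$ produces exactly three algebras up to isomorphism, namely the abelian algebra, the Heisenberg algebra plus a central line, and the $4$-dimensional filiform algebra; in the list of the statement these correspond to the abelian algebra, $M^5$, and $M^7_{0,0}$.

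For $\dim N = 3$ I would write $\mathfrak{g} = N \oplus \mathbb{K} e_4$ as vector spaces and observe that the operator $d = \mathrm{ad}_{e_4}|_N$ is a derivation whose class modulo nilpotent derivations is nonzero (otherwise $N + \mathbb{K} e_4$ would itself be nilpotent, contradicting maximality of $N$). Up to isomorphism $N$ is either $\mathbb{K}^3$ or the Heisenberg algebra $\mathfrak{h}_3$. In the abelian case, conjugating $d$ to a normal form and rescaling $e_4$ reduces the problem to classifying non-nilpotent elements of $\mathfrak{gl}_3(\mathbb{K})$ modulo $GL_3(\mathbb{K})$-conjugation and nonzero scalar; the resulting representatives are $M^2$, $M^3_a$, $M^4$, $M^6_{a,b}$, and $M^7_{a,b}$ with $(a,b)\neq(0,0)$. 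In the Heisenberg case, the constraint that $d$ be a derivation of $\mathfrak{h}_3$ forces a prescribed action on the center, and classifying the remaining freedom produces $M^{11}$, $M^{12}$, $M^{13}_a$, and $M^{14}_a$.

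The case $\dim N = 2$ forces $N$ abelian and $\mathfrak{g}/N$ a $2$-dimensional algebra acting faithfully on $N$. A short argument shows that $\mathfrak{g}/N$ must be abelian here, for otherwise $[\mathfrak{g},\mathfrak{g}]$ would contain a third independent vector outside $N$. One is then left with two commuting non-nilpotent operators on $\mathbb{K}^2$; simultaneous reduction combined with matching equivalences under change of basis on both $N$ and its complement yields $M^8$ as the split form and $M^9_a$ as the non-split form parameterized by the irreducible quadratic $X^2 - X - a$.

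The main obstacle is not the case split itself but the bookkeeping of isomorphism invariants. In each family carrying continuous parameters (notably $M^3_a$, $M^6_{a,b}$, $M^7_{a,b}$, $M^9_a$, $M^{13}_a$, $M^{14}_a$) one must extract canonical invariants from the Jordan structure of the adjoint operator modulo the rescaling $e_4 \mapsto \alpha e_4$ and $GL(N)$-changes of basis, and then verify that distinct normal forms in the list are genuinely non-isomorphic by comparing these invariants. Proving completeness by a careful exhaustion of Jordan types, together with pinning down the sometimes subtle equivalences within a single parameter family, is what makes the classification technically non-trivial despite the clean conceptual scheme.
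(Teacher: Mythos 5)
This theorem is not proved in the paper at all: it is quoted verbatim from de~Graaf's classification \cite{degraaf4solvable}, so there is no internal proof to compare your argument against. Judged on its own, your outline follows the classical Mubarakzyanov-style strategy (case split on the dimension of the nilradical $N$, then classification of the outer action of a complement), which is sound in broad strokes and is a legitimate alternative to de~Graaf's actual method, which proceeds by realizing $\mathfrak{g}$ as a codimension-one ideal plus a derivation and settles the isomorphism problem with Gr\"obner-basis computations. Your preliminary reductions are correct: $\dim N \geq \tfrac12 \dim\mathfrak{g}$ in characteristic zero, $[\mathfrak{g},\mathfrak{g}]\subseteq N$ forces $\mathfrak{g}/N$ abelian, and the three nilpotent $4$-dimensional algebras are correctly identified with the abelian algebra, $M^5$, and $M^7_{0,0}$.

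The gap is that the proposal stops exactly where the theorem begins. The entire content of the statement is the explicit list of normal forms together with the attached equivalence conditions (the restriction $a=b\neq 0$ or $a=0$ or $b=0$ on $M^7_{a,b}$, the irreducibility of $X^2-X-a$ for $M^9_a$, the relation $a=\alpha^2 b$ for $M^{14}$), and none of these is derived or verified; you acknowledge this by deferring the ``bookkeeping of isomorphism invariants.'' A proof must actually exhibit the reduction to each normal form and prove pairwise non-isomorphism, e.g.\ by computing invariants of $\operatorname{ad}$ modulo the rescaling $e_4\mapsto\alpha e_4$. One concrete technical error in the plan: you propose to classify $d=\operatorname{ad}_{e_4}|_N$ via its \emph{Jordan} structure, but the theorem is stated over a general field $\mathbb{K}$ of characteristic zero, where Jordan form is unavailable; this is precisely why the list contains companion-matrix families $M^6_{a,b}$, $M^7_{a,b}$ and the non-split family $M^9_a$. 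The argument must be run with rational canonical forms (and with care about which conjugacy classes merge under the scalar action), otherwise the case analysis is incomplete off the algebraically closed case.
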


In the following, we will give all the traces $\tau$ on the Lie algebras listed above, we add to this list two non-solvable Lie algebras of dimension $4$, and the induced $3$-Lie algebras: (for a Lie algebra $\mathfrak{g}$ $(e_i)_{1\leq i \leq dim \mathfrak{g}}$ is a basis of $\mathfrak{g}$, and for $x \in \mathfrak{g}$, $(x_i)_{1 \leq i \leq dim \mathfrak{g}}$ are its coordinates in this basis).

\begin{tabular}{ccc}\hline
\textbf{Lie algebra} & \textbf{Trace} & \textbf{Induced $3$-Lie algebra}\\ \hline
Abelian Lie algebra & All linear forms & Abelian $3$-Lie algebra \\ \hline
$L(3,-1)$ & $\tau (x) = t_1 x_1 + t_3 x_3$ & $\AKMSbracket{e_1,e_2,e_3} = t_3 e_2$ \\ \hline
$L(3,1)$ & $\tau (x) = t_1 x_1 + t_2 x_2$ & Abelian $3$-Lie algebra \\ \hline
\begin{tabular}{c}$L(3,2,a)$, $L(3,3)$ \\ $L(3,4,a)$ \end{tabular} & $\tau (x)=t_3 x_3$ & Abelian $3$-Lie algebra \\ \hline
$L(3,5)$, $L(3,6)$ & $\tau (x) = 0$ & Abelian $3$-Lie algebra \\ \hline
\begin{tabular}{c}$M^2$, $M^3_a$ \\ $M^6_{a,b}$; $M^7_{a,b}$\end{tabular} ($a \neq 0$) & $\tau (x) = t_4 x_4$ &  Abelian $3$-Lie algebra \\ \hline
$M^3_0$ & $\tau (x) = t_2 x_2 + t_4 x_4$ & $\AKMSbracket{e_1,e_2,e_4} = -t_2 e_1$ \\ \hline
$M^4$ & $\tau (x) = t_1 x_1 + t_2 x_2 + t_4 x_4$ & \begin{tabular}{c} $\AKMSbracket{e_1,e_2,e_4} = t_1 e_3$ \\ $\AKMSbracket{e_1,e_3,e_4}=t_1 e_3$ \\ $\AKMSbracket{e_2,e_3,e_4}= t_2 e_3$\end{tabular} \\ \hline
$M^5$ & $\tau (x) = t_1 x_1 + t_2 x_2 + t_4 x_4$ &  $\AKMSbracket{e_1,e_2,e_4}=t_1 e_3$ \\ \hline
$M^6_{0b}$ & $\tau (x) = t_1 x_1 + t_4 x_4$ & \begin{tabular}{c}$\AKMSbracket{e_1,e_2,e_4}=t_1 e_3$ \\ $\AKMSbracket{e_1,e_3,e_4}= t_1 (be_2+e_3)$\end{tabular}  \\ \hline
$M^7_{0b}$ & $\tau (x) = t_1 x_1 + t_4 x_4$ & \begin{tabular}{c} $\AKMSbracket{e_1,e_2,e_4}=t_1 e_3$ \\ $\AKMSbracket{e_1,e_3,e_4}= t_1 be_2$\end{tabular} \\ \hline
$M^8$ & $\tau (x) = t_1 x_1 + t_3 x_3$ & \begin{tabular}{c} $\AKMSbracket{e_1,e_2,e_3}=t_3 e_2$ \\ $\AKMSbracket{e_1,e_3,e_4}= t_1 e_4$\end{tabular} \\ \hline
$M^9_a$ & $\tau (x) = t_3 x_3 + t_4 x_4$ & \begin{tabular}{c}$\AKMSbracket{e_1,e_3,e_4} = -t_3 (e_1 + a e_2) + t_4 e_1$ \\ $\AKMSbracket{e_2,e_3,e_4}=t_3 e_1 + t_4 e_2$\end{tabular} \\ \hline
$M^{11}$ & $\tau (x) = t_4 x_4$ & \begin{tabular}{c} $\AKMSbracket{e_1,e_3,e_4}=t_4 e_2$ \\ $\AKMSbracket{e_2,e_3,e_4}= t_4 e_1$\end{tabular} \\ \hline
\begin{tabular}{c}$M^{12}$, $M^{13}_a$ \\ $M^{14}_a$, $a\neq 0$\end{tabular} & $\tau (x) = t_4 x_4$ & $\AKMSbracket{e_1,e_3,e_4}=t_4 e_2$ \\ \hline
$M^{13}_0$ & $\tau (x) = t_3 x_3 + t_4 x_4$ & \begin{tabular}{c} $\AKMSbracket{e_1,e_3,e_4}=-t_3 e_1 + t_4 e_2$ \\  $\AKMSbracket{e_2,e_3,e_4}=-t_3 e_2$\end{tabular} \\ \hline
$M^{14}_0$ & $\tau (x) = t_3 x_3 + t_4 x_4$ & $\AKMSbracket{e_1,e_3,e_4}=t_4 e_2$ \\ \hline
$\mathfrak{gl}_2(\mathbb{K})$ & $\tau (x) = t_4 x_4$ & \begin{tabular}{c} $\AKMSbracket{e_1,e_2,e_4} = 2t_4 e_2$ \\ $\AKMSbracket{e_1,e_3,e_4}=-2t_4 e_3$ \\ $\AKMSbracket{e_2,e_3,e_4}=t_4 e_1$ \end{tabular} \\ \hline
$E_3 \times \mathbb{K}$ ($\mathbb{K}=\mathbb{R}$) & $\tau (x) = t_4 x_4$ & \begin{tabular}{c} $\AKMSbracket{e_1,e_2,e_4} = t_4 e_3$ \\ $\AKMSbracket{e_1,e_3,e_4}=-t_4 e_2$ \\ $\AKMSbracket{e_2,e_3,e_4}=t_4 e_1$ \end{tabular} \\ \hline
\end{tabular}
\\

Where $E_3$ denotes the $3$-dimensional Euclidean space equipped with the cross product.


\section{Examples}\label{AKMS:Examples}
\subsection{Adjoint Representation $1$-Cocycles and Coboundaries}
Here, we give the set of $1$-cocycles/coboundaries of 4 chosen Lie algebras ($\mathfrak{gl}_2(\mathbb{K}$ and $M^4$, $M^5$ and $M^8$ in the classification above) in the classification above and $1$-cocycles/coboundaries of the induced algebras using a chosen trace map for each one, the computations were done using the computer algebra software Mathematica.

Shortly explained, the computation goes this way:

Let $(A,\AKMSbracket{.,.})$ be a Lie algebra of dimension $n$ with a basis $B=\{e_1,...,e_n\}$, $\tau$ a trace and $(A,\AKMSbracket{.,.,.}_\tau)$ the induced algebra.
Denote the structure constants of $(A,\AKMSbracket{.,.})$ with respect to this basis $B$ by $\AKMSpara{c_{ij}^k}_{1\leq i,j,k \leq n}$ and by $\AKMSpara{ct_{ijk}^q}_{1\leq i,j,k,q \leq n}$ those of $(A,\AKMSbracket{.,.,.}_\tau)$. The linear form $\tau$ is represented by the one-line matrix $T=\AKMSpara{t_i}_{1\leq i \leq n}$.
A given linear map $f : A \to A$ ($1$-cochain) may be represented by a $n\times n$ matrix, $Z=\AKMSpara{z_{ij}}_{1\leq i,j \leq n}$. In terms of structure constants, the condition for $f$ (represented by the matrix $Z$) to be a cocycle writes for the Lie algebra:
\[ \sum_{k=1}^n \AKMSpara{  c_{ij}^k z_{qk} - c_{kj}^q z_{ki} - c_{ik}^q z_{k,j}  } = 0 , \forall i,j,q, \] 
and for the induced ternary algebra
\[ \sum_{p=1}^n \AKMSpara{  ct_{ijk}^p z_{qp} - ct_{pjk}^q z_{pi} - ct_{ipk}^q z_{pj} - ct_{ijp}^q z_{pk}  }= 0, \forall i,j,k,q. \]

By solving these equations, we get a set of conditions, that we apply to $Z$ to get the matrices listed in the tables below, under "Cocycle" and "Ternary cocycle" respectively.

Matrices listed under "Coboundary" and "Ternary coboundary" are obtained by putting in column $j$ respectively $\AKMSbracket{y,e_j}$ or $\AKMSbracket{x,y,e_j}_\tau$, where $x=\AKMSpara{x_1,...,x_n}$ and $y=\AKMSpara{y_1,...,y_n}$, and $x_{ij}=x_i y_j - x_j y_i$.

\begin{itemize}
\item $\mathfrak{gl}_2(\mathbb{K})$:

\begin{tabular}{ccc} \hline
Cocycle & Coboundary & $dim H^1$ \\ \hline
$\begin{pmatrix}
0 & z_{12} & z_{13} & 0\\
-2z_{13} & z_{22} & 0 & 0\\
-2z_{12} & 0 & z_{22} & 0\\
0 & 0 & 0 & z_{44}
\end{pmatrix}$ &
 $\begin{pmatrix}
0 & -y_{3} & y_{2} & 0\\
-2y_{2} & 2y_{1} & 0 & 0\\
2y_{3} & 0 & -2y_{1} & 0\\
0 & 0 & 0 & 0
\end{pmatrix}$ &
 $1$ \\ \hline
Trace : & $\tau(x)=x_4$ & \\ \hline
Ternary cocycle & Ternary cobounary & $dim H^1_{\tau}$ \\ \hline
 $
\begin{pmatrix}
z_{11} & z_{12} & z_{13} & z_{14}\\
-2z_{13} & z_{22} & 0 & z_{24}\\
-2z_{12} & 0 & 2z_{11}-z_{22} & z_{34}\\
0 & 0 & 0 & -z_{11}
\end{pmatrix}
$ &
 $\begin{pmatrix}
0 & x_{34} & x_{42} & x_{23}\\
-2x_{42} & -2x_{14} & 0 & 2x_{12}\\
-2x_{34} & 0 & 2x_{14} & -2x_{13}\\
0 & 0 & 0 & 0
\end{pmatrix}$ &
 $1$ \\ \hline
\end{tabular}

\item $M^4$:

\begin{tabular}{ccc} \hline
Cocycle & Coboundary & $dim H^1$ \\ \hline
$
\begin{pmatrix}
z_{11} & z_{12} & 0 & z_{14}\\
z_{21} & z_{22} & 0 & z_{24}\\
-z_{21} & z_{32} & z_{22}+z_{32} & z_{34}\\
0 & 0 & 0 & 0
\end{pmatrix}
$ &
 $
\begin{pmatrix}
0 & 0 & 0 & 0\\
0 & 0 & 0 & 0\\
0 & -y_{4} & -y_{4} & y_{2}+y_{3}\\
0 & 0 & 0 & 0
\end{pmatrix}$ &
 $6$ \\ \hline
Trace : & $\tau(x)=x_{1}+x_{2}+x_{4}$ & \\ \hline
\end{tabular}

\begin{tabular}{ccc}
Ternary cocycle & & \\ \hline
$
\begin{pmatrix}
z_{11} & z_{12} & 0 & z_{14}\\
z_{21} & z_{11}-z_{12}+z_{21} & 0 & z_{24}\\
z_{31} & z_{32} & z_{11}-z_{12}-z_{31}+z_{32} & z_{34}\\
z_{41} & z_{41} & 0 & -z_{11}-z_{21}
\end{pmatrix}$
 & & \\ \hline
Ternary cobounary & $dim H^1_{\tau}$& \\ \hline
$
\begin{pmatrix}
0 & 0 & 0 & 0\\
0 & 0 & 0 & 0\\
x_{24}+x_{34} & x_{34}-x_{14} & -x_{14}-x_{24} & x_{12}+x_{13}+x_{23}\\
0 & 0 & 0 & 0
\end{pmatrix}$ &
 $6$ &\\ \hline

\end{tabular}

\item $M^5$:

\begin{tabular}{ccc} \hline
Cocycle & Coboundary & $dim H^1$ \\ \hline
$
\begin{pmatrix}
z_{11} & z_{12} & 0 & z_{14}\\
0 & z_{22} & 0 & z_{24}\\
z_{31} & z_{32} & z_{33} & z_{34}\\
0 & z_{42} & 0 & z_{33}-z_{22}
\end{pmatrix}
$ &
  $
\begin{pmatrix}
0 & 0 & 0 & 0\\
0 & 0 & 0 & 0\\
0 & -y_{4} & 0 & y_{2}\\
0 & 0 & 0 & 0
\end{pmatrix}
$ &
 $8$ \\ \hline
Trace : & $\tau(x)=x_{1}$ & \\ \hline
Ternary cocycle & Ternary cobounary & $dim H^1_{\tau}$ \\ \hline
$
\begin{pmatrix}
-z_{22}+z_{33}-z_{44} & z_{12} & 0 & z_{14}\\
z_{21} & z_{22} & 0 & z_{24}\\
z_{31} & z_{32} & z_{33} & z_{34}\\
z_{41} & z_{42} & 0 & z_{44}
\end{pmatrix}
$
 &
 $
\begin{pmatrix}
0 & 0 & 0 & 0\\
0 & 0 & 0 & 0\\
x_{24} & -x_{14} & 0 & x_{12}\\
0 & 0 & 0 & 0
\end{pmatrix}
$ &
 $9$ \\ \hline

\end{tabular}

\item $M^8$:

\begin{tabular}{ccc} \hline
Cocycle & Coboundary & $dim H^1$ \\ \hline
 $
\begin{pmatrix}
0 & 0 & 0 & 0\\
z_{21} & z_{22} & 0 & 0\\
0 & 0 & 0 & 0\\
0 & 0 & z_{43} & z_{44}
\end{pmatrix}
$ &
   $
\begin{pmatrix}
0 & 0 & 0 & 0\\
-y_{2} & y_{1} & 0 & 0\\
0 & 0 & 0 & 0\\
0 & 0 & -y_{4} & y_{3}
\end{pmatrix}
$ &
 $0$ \\ \hline
Trace : & $\tau(x)=x_{1}+x_{3}$ & \\ \hline
Ternary cocycle & Ternary cobounary & $dim H^1_{\tau}$ \\ \hline
$
\begin{pmatrix}
-z_{33} & 0 & z_{13} & 0\\
z_{21} & z_{22} & z_{23} & 0\\
z_{31} & 0 & z_{33} & 0\\
z_{41} & 0 & z_{43} & z_{44}
\end{pmatrix}
$
 &
$
\begin{pmatrix}
0 & 0 & 0 & 0\\
x_{23} & -x_{13} & x_{12} & 0\\
0 & 0 & 0 & 0\\
x_{34} & 0 & -x_{14} & x_{13}
\end{pmatrix}
$ &
 $4$ \\ \hline

\end{tabular}

\end{itemize}

\end{document}